\DeclareMathOperator*{\argmin}{arg\,min}
\newtheorem{theorem}{Theorem} 
\newtheorem{lemma}[theorem]{Lemma}
\theoremstyle{definition}
\theoremstyle{remark}
\newtheorem{remark}{Remark}
\newtheorem{example}[remark]{Example}
\newcommand{\abs}[1]{\left\vert {#1} \right\vert}
\newcommand{\Norm}[1]{\left\Vert {#1} \right\Vert}
\newcommand{\bull}{\raisebox{0.5\height}{{\tiny$\bullet$}}} 
\newcommand{\IR}{\mathbb{R}} 
\newcommand{\IN}{\mathbb{N}} 
\newcommand{\IZ}{\mathbb{Z}} 
\newcommand{\IC}{\mathbb{C}} 
\newcommand{\cont}[1]{\mathcal{C}^{#1}} 
\newcommand{\Rs}{\textsf{R}} 
\newcommand{\rec}[1]{\frac{1}{#1}} 
\begin{document}

\title{Eigensolutions and spectral analysis of a model for vertical gene transfer of plasmids}  

\author[a]{Eva Stadler}
\affil[a]{\small\textit{ Department of Mathematics, Technische Universität München, 85748 Garching, Germany}}

\date{}

\maketitle

\begin{abstract} 
	\noindent Plasmids are autonomously replicating genetic elements in bacteria. At cell division plasmids are distributed among the two daughter cells. This gene transfer from one generation to the next is called vertical gene transfer.
	We study the dynamics of a bacterial population carrying plasmids and are in particular interested in the long-time distribution of plasmids.
	Starting with a model for a bacterial population structured by the discrete number of plasmids, we proceed to the continuum limit in order to derive a continuous model.
	The model incorporates plasmid reproduction, division and death of bacteria, and distribution of plasmids at cell division. It is a hyperbolic integro-differential equation and a so-called growth-fragmentation-death model.
	As we are interested in the long-time distribution of plasmids we study the associated eigenproblem and show existence of eigensolutions.
	The stability of this solution is studied by analyzing the spectrum of the integro-differential operator given by the eigenproblem. 
	By relating the spectrum with the spectrum of an integral operator we find a simple real dominating eigenvalue with a non-negative corresponding eigenfunction.
	Moreover, we describe an iterative method for the numerical construction of the eigenfunction.	
\end{abstract}

\vspace{0.3cm}
\noindent\textbf{Keywords} Growth-fragmentation-death equation $\cdot$ Plasmid dynamics $\cdot$ Hyperbolic PDE $\cdot$ Eigenproblem $\cdot$ Spectral analysis

\vspace{0.3cm}
\noindent\textbf{Mathematics Subject Classifier (2010)} 92D25 $\cdot$ 35L02 $\cdot$ 35Q92 $\cdot$ 47A10 $\cdot$ 47B65

\section{Introduction}
\label{S-Intro}

Plasmids are extrachromosomal genetic elements in bacteria which can replicate autonomously \cite{Casali2003}.
Plasmids have been intensively studied as they are used in biotechnology as vectors to amplify DNA and for recombinant protein production \cite{Clark2016}.
Moreover, the spread of plasmids is one mechanism by which antibiotic resistance genes spread in a bacterial population \cite{Casali2003,Beebee2008}.
For their biotechnological use it is important that bacteria do not loose plasmids as they cannot produce the desired proteins without the corresponding genetic information. It is also important that plasmids do not accumulate in bacteria as with a high plasmid load the metabolic burden is very high and the bacteria become inactive i.e. non-producing \cite{Bentley1990}.
Plasmid loss can be dealt with by adding antibiotic resistance genes to the plasmid genome and antibiotics to the growth medium of the bacteria such that only bacteria inheriting the plasmid can survive \cite{Clark2016}.
However, it is not possible to counteract plasmid accumulation in the same way.
Therefore, it is of great interest to study the distribution of plasmids in a bacterial population and to gain a deeper understanding of the mechanisms leading to plasmid accumulation.\par

Plasmids can be classified by their copy number i.e. the number of plasmids a cell contains. There are high-copy plasmids with hundreds of copies per cell and low-copy plasmids with only few copies per cell \cite{Casali2003}. 
It is usually assumed that high-copy plasmids are randomly distributed to the daughter cells at cell division.
However, there is also evidence that this is not the case and there are also segregation mechanisms for high-copy plasmids such as clustering of plasmids and localization at the cell poles \cite{Pogliano2001, MillionWeaver2014}. \par

There are many different ways to model the dynamics of plasmids, for example one can consider the bacterial population structured by the number of plasmids \cite{Ganusov2000, Mueller2017} or distinguish between the plasmid-free and the plasmid-bearing subpopulation \cite{Stewart1977}. 
There are discrete \cite{Bentley1993}, stochastic \cite{Mueller1982}, and continuous models \cite{Doumic2007, Calvez2012}. Continuous models for plasmid dynamics can be classified as aggregation-fragmentation or growth-fragmentation equations which have been studied extensively \cite{Calvez2012,Mischler2016}. 
There is also extensive literature on models of structured populations \cite{Magal2008,Calsina1995,Metz1986,Cushing1998} and of structured cell population dynamics \cite{Perthame2007,Doumic2007}. Arino \cite{Arino1995} gives an overview of different models and mathematical methods used for their analysis.
The mathematical methods used for the analysis of growth-fragmentation equations include theory of semigroups \cite{Webb2008}, the Laplace transform \cite{Heijmans1986}, and theory of positive operators \cite{Heijmans1986, Doumic2007}. \par

The aim of this paper is to extend results about existence of eigensolutions for a model of vertical gene transfer of high-copy plasmids. 
Furthermore, we study the stability of the eigensolution by analyzing the spectrum of the differential operator given by the model equation.
We focus on high-copy plasmids and derive a continuous model for vertical gene transfer of plasmids in a bacterial population.
As plasmids often contain genes that are necessary or beneficial for survival of the host-bacterium \cite{Summers1996} it makes sense that the death rate of bacteria without plasmids is higher than the death rate of bacteria containing plasmids. Therefore, we include a non-constant death rate depending on the number of plasmids into the model.
Bacteria with a high plasmid load also have a high metabolic burden \cite{Bentley1990} and therefore plasmids cannot reproduce to arbitrarily large numbers. This behavior can be modeled e.g. by a logistic plasmid reproduction rate.
The model we consider is given by a hyperbolic partial integro-differential equation and is similar to the model considered by \cite{Campillo2016, Doumic2007, Mueller2017, Calvez2012}. However, to the best of our knowledge there are no existence or stability results for this model with non-constant cell death rate and logistic plasmid reproduction rate.\par 

The paper is structured as follows: in Section \ref{S-Model} we develop a partial integro-differential equation model for a bacterial population structured by the number of plasmids. In Section \ref{S-Existence} we show existence of a solution for the associated eigenproblem. The stability of the solution of the eigenproblem is considered in Section \ref{S-Spectrum} where we show that there is a real dominant eigenvalue. In Section \ref{S-Numerics} we describe a method to construct and numerically simulate the eigensolution.

\section{Model}
\label{S-Model}

\subsection{Discrete model}
\label{S-DiscrMod}

We consider a bacterial population structured by the number of plasmids.
First, we formulate a discrete model and then in section \ref{S-ContMod} we proceed to a continuous model.

It turns out that plasmid segregation is central for the properties of the model, particularly if a cell contains only few plasmids at cell division.
We assume that there is a number $n$ such that during cell division bacteria with fewer than $n$ plasmids pass all their plasmids to one daughter cell and none to the other daughter cell. If a bacterium has only one plasmid at the time of cell division then it can give this plasmid only to one daughter and the other daughter receives no plasmids. Thus, with $n=2$ this assumption is reasonable.

The number of plasmids is denoted by $i\in\IN_0$. The bacterial population consists of bacteria with less than $n$ plasmids, $v_i(t)$ for $i\in\IN_0$ with $i<n$, and bacteria with at least $n$ plasmids, $w_i(t)$ for $i\in\IN$ with $i\geq n$. We denote the plasmid reproduction rate by $\tilde{b}(i)$, cell division rate by $\beta(i)$, and cell death rate by $\mu(i)$. At cell division a bacterium with at least $n$ plasmids distributes its plasmids to the two daughters. The daughter cells can be distinguished from one another as one daughter inherits the mother's younger pole (the poles are the respective ends of rod-shaped bacteria, they can be distinguished e.g. by their age \cite{Aakre2012}). 
If the mother contains $j\geq n$ plasmids, we denote by $p(i,j)$ the probability that the daughter with the mother's younger pole inherits $i$ plasmids.

We assume that $\tilde{b}(i)=0$ for $i=0$. For $j\geq n$ it holds that
\begin{align*}
\sum\limits_{i=0}^{j} p(i,j)+p(j-i,j) = 2.
\end{align*}
The model equations are then given by:
\begin{align}
	\dot{v}_0 &= (\beta(0)-\mu(0)) v_0 + \sum\limits_{j=1}^{n-1} \beta(j) v_{j}  + \sum\limits_{j=n}^{\infty} \beta(j)[p(0,j)+p(j,j)]w_j\label{DiscEqv0} \\
	\dot{v}_i &= -\mu(i) v_i + \tilde{b}(i-1) v_{i-1} - \tilde{b}(i) v_i +  \sum\limits_{j=n}^{\infty} \beta(j) [p(i,j)+p(j-i,j)]w_j\label{DiscEqv}\\
	\begin{split}\label{DiscEqu}
		\dot{w}_i &= -(\beta(i)+\mu(i)) w_i + \tilde{b}(i-1)w_{i-1} - \tilde{b}(i)w_i \\
		&\phantom{{}={}} + \sum\limits_{j=i}^{\infty} \beta(j) [p(i,j)+p(j-i,j)] w_j,
	\end{split}
\end{align}
where $w_{n-1}:= v_{n-1}$.

\subsection{Continuous model}
\label{S-ContMod}

Next, we want to proceed to the continuum limit, so we approximate $v_i(t)$ and $w_i(t)$ by smooth functions $v(z,t)$ resp. $w(z,t)$, i.e. for $h>0$ small
\begin{align*}
v_i(t) \approx \int\limits_{ih -\frac{h}{2}}^{ih+\frac{h}{2}} v(z,t)\: dz \approx v(ih,t)h \:\text{ resp. }\: w_i(t) \approx \int\limits_{ih -\frac{h}{2}}^{ih+\frac{h}{2}} w(z,t)\: dz \approx w(ih,t)h.
\end{align*}
Likewise, we approximate
\begin{itemize}
	\setlength\itemsep{0.5em}
	\item[\bull] $\dot{v}_i(t) \approx \partial_t v(ih,t) h$ and $\dot{w}_i(t) \approx \partial_t w(ih,t) h$ for $i\in\IN$,
	\item[\bull] $\tilde{b}(i) \approx b(ih)\frac{1}{h}$ for $i\in\IN_0$,
	\item[\bull] $\beta(i) = \beta(ih)$ and $\mu(i) = \mu(ih)$ for $i\in\IN_0$,
	\item[\bull] $p(i,j)+p(j-i,j) = k(ih,jh) h$ for $i\in\IN$, $j\in\IN$, $i<j$, $j\geq n$, and
	\item[\bull] $p(0,j)+p(j,j) = k_0(jh)$ for $j\in\IN$, $j\geq n$.
\end{itemize}
With these approximations, equation \eqref{DiscEqv0} becomes
\begin{align*}
\dot{v}_0 = (\beta(0)-\mu(0)) v_0 + \sum\limits_{j=1}^{n-1} \beta(jh) v(jh,t)h + \sum\limits_{j=n}^{\infty} \beta(jh) k_0(jh) \:w(jh,t)h.
\end{align*}
By defining $jh =: z'$ and $nh =: m>0$ and taking the limit $h\rightarrow 0$ we obtain
\begin{align*}
\dot{v}_0 = (\beta(0) -\mu(0)) v_0 + \int\limits_{0}^{m} \beta(z') v(z',t)\: dz' + \int\limits_{m}^{\infty} \beta(z') k_0(z') w(z',t)\: dz'.
\end{align*}
Analogously, we obtain from equations \eqref{DiscEqv} and \eqref{DiscEqu} for $z\in(0,m)$
\begin{align*}
\partial_t v(z,t) &+ \partial_z (b(z)v(z,t)) = -\mu(z) v(z,t) + \int\limits_{m}^{\infty} \beta(z') \: k(z,z')\: w(z',t)\: dz'
\end{align*}
and for $z\geq m$
\begin{align*}
&\partial_t w(z,t) + \partial_z (b(z)w(z,t)) =\\
&\quad = -(\beta(z)+\mu(z)) w(z,t) + \beta(z) k_0(z) w(z,t) + \int\limits_{z}^{\infty} \beta(z') \: k(z,z')\: w(z',t)\: dz'\\
&\quad =  -\left(\beta(z)(1-k_0(z))+\mu(z)\right) w(z,t) +  \int\limits_{z}^{\infty} \beta(z') \: k(z,z')\: w(z',t)\: dz'.
\end{align*}
We assume that bacteria with at least $m$ plasmids always pass at least one plasmid to each daughter at cell division, therefore $k_0(z)=0$ for all $z\geq m$. The case $k_0 \neq 0$ is analogous to $k_0(z)=0$.

At $z=0$ we add a zero flux boundary condition i.e. $\lim\limits_{z\rightarrow 0^+} b(z)v(z,t)=0$ for all $t\geq 0$.
The boundary condition at $z=m$ is given by $b(m)w(m,t)=\lim\limits_{z\rightarrow m^-} b(z)v(z,t)$ and therefore for continuous $b$ with $b(m)\neq 0$, $w(m,t)=\lim\limits_{z\rightarrow m^-} v(z,t)$ for all $t\geq 0$.

Overall, the continuous model reads
\begin{flalign}
\left\lbrace
\begin{aligned}
	&\dot{v}_0 (t) = (\beta(0) -\mu(0)) v_0 (t) + \int\limits_{0}^{m} \beta(z') v(z',t)\: dz',\\
	&\begin{aligned}
		&\begin{aligned}
			&\partial_t v(z,t) + \partial_z (b(z)v(z,t)) = -\mu(z) v(z,t) \\
			&\qquad + \int\limits_{m}^{\infty} \beta(z') k(z,z')w(z',t)\: dz',
		\end{aligned}\quad
		&\text{if } z\in(0,m),\\
		&\begin{aligned}
			&\partial_t w(z,t) + \partial_z (b(z)w(z,t)) = -(\beta(z)+\mu(z)) w(z,t)\\
			&\qquad + \int\limits_{z}^{\infty} \beta(z')k(z,z')w(z',t)\: dz',
		\end{aligned}\quad
		&\:\text{if } z\geq m,\\
	\end{aligned}\\
	& \lim\limits_{z\rightarrow 0^+} b(z)v(z,t)=0\quad \text{and} \quad w(m,t)=\lim\limits_{z\rightarrow m^{-}} v(z,t) \quad \text{for all }t\geq 0,\\
	& v(z,0) = \tilde{v}_0(z) \quad \text{and}\quad w(z,0)=w_0(z).
\end{aligned}
\right.\label{3Eq}
\end{flalign}

If we have a solution for the second and third equation of \eqref{3Eq} then we can find a solution for the first equation by variation of parameters. Therefore, in the following we only consider the latter equations.

Let $u(z,t) := \chi_{(0,m)}(z) v(t,z) + \chi_{[m,\infty)}(z) w(z,t)$ where $\chi$ denotes the characteristic function. Then, the second and third equation of \eqref{3Eq} can be combined to 
\begin{equation*}
	\begin{aligned}
		\partial_t u(z,t) + \partial_z(b(z)u(z,t)) &= - (\beta(z)\cdot\chi_{[m,\infty)}(z) + \mu(z)) u(z,t)\\
		&\phantom{{}={}} + \int\limits_{\max\{m,z\}}^{\infty} \beta(z') k(z,z') u(z',t)\: dz',
	\end{aligned}
\end{equation*}
for $z>0$. For the sake of brevity let $\beta_m(z) := \beta(z)\cdot \chi_{[m,z_0]}(z)$.\par

\medskip
We make the following assumptions on the parameters in model:
\begin{enumerate}[label=(A\arabic*),leftmargin=*]
	\item There is a $z_0\geq 1$ such that $b(0)=b(z_0)=0$, $b(z)>0$ for all $z\in (0,z_0)$, and $b\in \cont{1}([0,z_0])$. \label{Assb}
	\item $\beta \in \cont{0}([0,z_0])$ and $0<\underline{\beta}_m \leq \beta(z) \leq \overline{\beta}_m$ for all $z\in[m,z_0]$.\label{Assbeta}
	\item $\mu\in \cont{0}([0,z_0])$ and $0\leq \underline{\mu}\leq \mu(z)\leq \overline{\mu}$ for all $z\in[0,z_0]$.\label{Assmu}
	\item $k\in \cont{1}(\Omega)$, where $\Omega := \{ z,z'\in[0,z_0]:\: z\leq z'\text{ and } z'\geq m \}$, $k$ is symmetric in the sense that $k(z,z')=k(z'-z,z')$ for all $(z,z')\in \Omega$, $k(0,z')=k(z',z')=0$ for all $z'\in[m,z_0]$, and $\int_{0}^{z'} k(z,z')\: dz =2$ for all $z'\in[m,z_0]$.\label{Assk}
\end{enumerate}

\medskip
\noindent Assumptions \ref{Assb} - \ref{Assk} are assumed to hold throughout the remainder of this paper.

\medskip
\begin{example}\label{Ex-Ass}
	\begin{enumerate}[label=(\alph*),leftmargin=*]
		\item The conditions on the kernel $k$ are satisfied if there is a function $\Phi:[0,1]\rightarrow\IR$ with $\Phi\in \cont{1}([0,1])$, $\Phi(1-\xi)=\Phi(\xi)$ for all $\xi\in[0,1]$, $\Phi(0)=\Phi(1)=0$, and $\int_{0}^{1}\Phi(\xi)\:d\xi=1$ such that
		\begin{align*}
		k(z,z') = \frac{2}{z'} \Phi\left(\frac{z}{z'}\right)\chi_{\Omega}(z,z').
		\end{align*}
		We call a kernel of this type scalable (cf. \cite{Mueller2017}).
		A scalable kernel can be interpreted as bacteria (with at least $m$ plasmids) always distributing their plasmids in the same way independent of the number of plasmids they contain at cell division. The function $\Phi$ models how the plasmids are distributed to the daughters. For example, a bimodal distribution where bacteria give one daughter approximately 80\% of the plasmids and the other daughter the remaining plasmids is modeled by a function that is centered around 0.8 and 0.2.
		\item Constant cell division and death rates $\beta$ and $\mu$ with $\beta>0$ and $\mu\geq 0$ satisfy conditions \ref{Assbeta} and \ref{Assmu}. In general, we expect both the cell division and death rate to depend on the number of plasmids as e.g. it has been observed that the growth rate of bacteria decreases with increasing number of plasmids \cite{Bentley1990} and plasmids often carry genes necessary or beneficial for the survival of the bacterium such as resistance genes \cite{Summers1996}. 
	\end{enumerate}
\end{example}

\medskip
By assumption \ref{Assb}, if we start with a bacterial population where all bacteria contain at most $z_0$ plasmids, i.e. $w_0(z)=0$ respectively $u_0(z)=0$ for $z>z_0$, then the number of plasmids in a bacterium will never grow above $z_0$. Therefore, we consider for the remainder of this paper $z\leq z_0$.

We obtain the following model for a bacterial population structured by the number plasmids $z\in(0,z_0]$:
\begin{equation}
\left\lbrace\begin{aligned}
&\partial_t u(z,t) + \partial_z(b(z)u(z,t)) = -(\beta_m(z) + \mu(z)) u(z,t)\\
&\phantom{\partial_t u(z,t) + \partial_z(b(z)u(z,t)) = {}}+ \int\limits_{z}^{z_0} \beta_m(z') k(z,z') u(z',t)\: dz',\\
&\lim\limits_{z\rightarrow 0^+} b(z)u(z,t)=0 \text{ for all } t\geq 0, \quad u(z,0) = u_0(z).
\end{aligned}\right.\label{ModelEq}
\end{equation}

\medskip
\begin{remark}\label{R-AssPlasmids}
	Without the assumption that bacteria with few plasmids give all plasmids to one daughter at cell division we would have a singular integral kernel in \eqref{ModelEq}. For consistency, $\int_0^{z'} k(z,z')\: dz = 2$ for all $z'\in (0,z_0]$ and therefore $k(z,z')$ has a pole at $z'=0$. However, $\beta_m(z') k(z,z') = \beta(z')k(z,z')\chi_{[m,z_0]}(z')$ is bounded as it is equal to 0 for $z'<m$ and $\beta(z')$, $k(z,z')$ are assumed to be bounded for $z'\geq m$. Thus, we only require $\int_0^{z'} k(z,z')\: dz = 2$ for all $z'\in [m,z_0]$ and we do not have to deal with the technical difficulties that arise due to the singularity of $k$.
\end{remark}

\section{Existence of solutions to the eigenproblem}
\label{S-Existence}

It turns out that the long-term behavior is characterized by eigenfunctions. The eigenfunction $\mathcal{U}$ with eigenvalue $\lambda$ is a solution for the eigenproblem associated with \eqref{ModelEq}:
\begin{equation}
\left\lbrace\begin{aligned}
&(b(z)\mathcal{U}(z))' = -\left(\beta_m(z)+\mu(z)+\lambda\right)\mathcal{U}(z) + \int\limits_{z}^{z_0} \beta_m(z')k(z,z')\mathcal{U}(z') dz',\\
&\lim\limits_{z\rightarrow 0^+} b(z)\mathcal{U}(z)=0,\quad \mathcal{U}(z)\geq 0\text{ for all }z\in (0,z_0),\quad \int_{0}^{z_0} \mathcal{U}(z)\: dz =1.
\end{aligned}\right.\label{EigProb}
\end{equation}

\medskip
\noindent We define the flow $Z(t,z)$ by
\begin{equation}
\left\lbrace \begin{aligned}
&\frac{d}{dt} Z(t,z) = b(Z(t,z)),\quad\text{for all }t\geq 0, \: z\in[0,z_0],\\
&Z(0,z) = z,\quad\text{for all } z\in[0,z_0].
\end{aligned}\right.\label{Flow}
\end{equation}
The flow $Z(t,z)$ models the growth of plasmid in bacteria.
The Picard-Lindelöf theorem gives existence and uniqueness of the flow $Z(t,z)$ for $z\in[0,z_0]$ because $b\in\cont{1}([0,z_0])$ by \ref{Assb}.

\medskip
\begin{theorem}\label{ThmExistence}
	Under assumptions \ref{Assb}--\ref{Assk}, and the additional assumption
	\begin{enumerate}[label=(A\arabic*),leftmargin=*] 
		\setcounter{enumi}{4}
		\item $\int\limits_{0}^{\infty}\int\limits_{0}^{z_0} e^{-\int_{0}^{t} \underline{\mu}-\overline{\mu}+\beta_m\left( Z(s,z') \right)\:ds}\:dz'dt < \infty$, \label{Ass-ad}
	\end{enumerate}
	there exists a solution $(\lambda,\mathcal{U})$ for \eqref{EigProb} with $\lambda \in [-\overline{\mu},2\overline{\beta}_m + 2 -\underline{\beta}_m -\underline{\mu}]$ and $\mathcal{U}\in \cont{1}((0,z_0))$.
\end{theorem}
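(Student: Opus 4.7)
The strategy I would follow is a standard Krein--Rutman approach combined with a sweep in the spectral parameter $\lambda$. For each fixed $\lambda$, I treat the integral term $F_{\mathcal{U}}(z) := \int_{z}^{z_0} \beta_m(z') k(z,z') \mathcal{U}(z')\, dz'$ as a source and view the differential equation in \eqref{EigProb} as a linear inhomogeneous ODE for $\mathcal{U}$ with the boundary condition $\lim_{z\to 0^+} b(z)\mathcal{U}(z) = 0$. Integrating with the integrating factor $\exp\!\left( \int_0^z (\beta_m(s)+\mu(s)+\lambda)/b(s)\, ds \right)$, or equivalently along the characteristics $Z(\cdot, z)$ of \eqref{Flow}, yields the explicit formula
\[
\mathcal{U}(z) \;=\; \frac{1}{b(z)} \int_0^{z} \exp\!\left( -\int_y^z \frac{\beta_m(s)+\mu(s)+\lambda}{b(s)}\, ds \right) F_{\mathcal{U}}(y)\, dy.
\]
Substituting $F_{\mathcal{U}}$ back in defines a linear operator $T_\lambda$ on $\mathcal{U}$, for which a nontrivial non-negative fixed point (eigenvalue $1$) is exactly an eigensolution of \eqref{EigProb}. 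Assumption \ref{Ass-ad} is precisely what controls the weight $e^{-\int_0^t (\beta_m+\mu+\lambda)/b\, ds}$ integrated along characteristics and makes $T_\lambda$ a bounded operator on $L^1(0, z_0)$ (or $\cont{0}([0,z_0])$) uniformly for $\lambda$ in the target interval.

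Next, I would show that $T_\lambda$ is compact, positive, and irreducible. Compactness follows from the smoothing effect of two nested integrations together with the $\cont{1}$ regularity of $k$ from \ref{Assk} and the bounds of \ref{Assbeta}--\ref{Assmu}, via an Arzel\`a--Ascoli argument (or Fr\'echet--Kolmogorov in $L^1$); positivity is immediate from \ref{Assb}--\ref{Assk}; irreducibility comes from the non-degeneracy of $k$ on $\Omega$ combined with the fact that the flow $Z$ transports mass throughout $(0, z_0)$. Krein--Rutman then yields a simple real dominant eigenvalue $\rho(\lambda) > 0$ of $T_\lambda$ with a non-negative eigenfunction $\mathcal{U}_\lambda$.

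I would then verify that $\lambda \mapsto \rho(\lambda)$ is continuous and strictly decreasing. Quantitatively, testing $T_\lambda$ against the constant function $\mathbf{1}$ and using the kernel normalization $\int_0^{z'} k(z,z')\, dz = 2$ together with the bounds in \ref{Assbeta}--\ref{Assmu}, I would derive $\rho(-\overline{\mu}) \geq 1$ and $\rho(2\overline{\beta}_m + 2 - \underline{\beta}_m - \underline{\mu}) \leq 1$. The intermediate value theorem then supplies some $\lambda^\ast$ in the claimed interval with $\rho(\lambda^\ast) = 1$. The corresponding eigenfunction, renormalized so that $\int_0^{z_0} \mathcal{U}\, dz = 1$, is the required solution. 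Finally, the $\cont{1}$-regularity of $\mathcal{U}$ on $(0, z_0)$ follows from the representation formula: since $k \in \cont{1}$ and $\mathcal{U} \in L^1$, one has $F_{\mathcal{U}} \in \cont{1}$, and since $b \in \cont{1}$ with $b > 0$ on $(0, z_0)$, the formula for $\mathcal{U}$ inherits $\cont{1}$ regularity.

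The main obstacle is the third step, i.e.\ pinning down the specific interval $[-\overline{\mu},\, 2\overline{\beta}_m + 2 - \underline{\beta}_m - \underline{\mu}]$: the factor $2$ on the right is clearly the trace of the kernel mass, but obtaining sharp two-sided comparison with this particular endpoint requires testing $T_\lambda$ not merely on constants but on carefully chosen sub- and super-solutions (for instance constants combined with the lower bound of the eigenfunction deduced from irreducibility), and tracking the contribution of $\beta_m$ vanishing on $[0,m)$. A secondary technical point is making compactness fully rigorous near the degeneracies $b(0)=b(z_0)=0$ from \ref{Assb}; this is exactly where \ref{Ass-ad} does its work, but it requires a careful accounting of the behavior of $\mathcal{U}$ as $z \to 0^+$ and $z \to z_0^-$.
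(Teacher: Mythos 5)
Your broad strategy (reduce the ODE to a positive integral operator via the integrating factor, apply Krein--Rutman to get a non-negative eigenfunction, then sweep $\lambda$ and use the intermediate value theorem on the spectral radius to hit $1$) is the same strategy the paper uses, and your ``testing by integration'' argument that ties the endpoints $-\overline{\mu}$ and $2\overline{\beta}_m+2-\underline{\beta}_m-\underline{\mu}$ to the kernel mass $\int_0^{z'}k\,dz=2$ is essentially the paper's Step~4. However, there is a genuine gap in the middle of your argument: you assert that $T_\lambda$ is \emph{irreducible} (``irreducibility comes from the non-degeneracy of $k$ on $\Omega$ combined with the fact that the flow $Z$ transports mass throughout $(0,z_0)$'') and invoke Krein--Rutman for a simple dominant eigenvalue. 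This is false for the unregularized operator. Because $\beta_m(z')=\beta(z')\chi_{[m,z_0]}(z')$ vanishes on $(0,m)$, the operator $T_\lambda$ maps every function with essential support in $[0,m]$ to zero; the paper states this explicitly before Lemma~\ref{Lem-Uniqueness} (``the operator $T_{\xi}$ is neither strictly positive nor a non-support operator as it maps every function with essential support in $[0,m]$ to $0$''). So the strong Krein--Rutman theorem does not apply to $T_\lambda$, and in particular you do not get simplicity, strict positivity of the eigenfunction, or (for free) $r(T_\lambda)>0$ and continuity of $\lambda\mapsto r(T_\lambda)$ in the form you need.

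The paper circumvents exactly this obstruction by a regularization step you omit: it adds $\tfrac{2\varepsilon}{z_0}$ to the kernel (and $\varepsilon$ to the potential), so that the regularized operator $G_\lambda^\varepsilon$ \emph{is} strictly positive for $\varepsilon>0$, applies the strong Krein--Rutman theorem to $G_\lambda^\varepsilon$, pins down a $\lambda_\varepsilon$ with $r(G_{\lambda_\varepsilon}^\varepsilon)=1$, and only then passes to the limit $\varepsilon\to 0$, using Assumption~\ref{Ass-ad} and Arzel\`a--Ascoli to extract a convergent subsequence of eigenpairs. The $+2\varepsilon\leq 2$ from the regularization is in fact where the ``$+2$'' in the interval $[-\overline{\mu},\,2\overline{\beta}_m+2-\underline{\beta}_m-\underline{\mu}]$ comes from, which your proposal identifies purely with ``kernel mass'' and would therefore misattribute. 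A secondary point you gloss over is that $T_\lambda$ (or $G_\lambda$) is only well-defined for $\lambda$ above a threshold $\underline{\lambda}$, and showing $\underline{\lambda}\leq-\overline{\mu}$ so that the endpoint $\lambda=-\overline{\mu}$ is reachable is precisely the work Assumption~\ref{Ass-ad} does in the paper's Step~1; ``uniformly bounded for $\lambda$ in the target interval'' is not automatic and needs this estimate. Your proof sketch could be repaired by either following the paper's regularization, or by dropping the irreducibility claim, using the weak Krein--Rutman theorem (positive compact operator $\Rightarrow$ $r\in\sigma$, with non-negative eigenfunction if $r>0$), and separately proving $r(T_\lambda)>0$ via a restriction to $[m,z_0]$ — but as written, the irreducibility step is wrong and breaks the argument.
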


\medskip
\begin{remark}
	\begin{enumerate}[label=(\alph*),leftmargin=*]
		\item Note that if $\mu$ is constant, then Assumption \ref{Ass-ad} coincides with Assumption 3-4 of \cite{Campillo2016} and can therefore be interpreted in the same way: the flow $Z(t,z)$ has to move away from 0 or to be more exact out of the interval $[0,m]$ sufficiently fast (cf. \cite[Remark 7]{Campillo2016} and Example \ref{Ex-Ass-ad}\ref{Ex-Ass-ad-3}). See Example \ref{Ex-Ass-ad} for cases in which Assumption \ref{Ass-ad} holds.
		\item By change of variables $Z(t,z') \rightarrow z$,
		\begin{align*}
		\int\limits_{0}^{\infty}\int\limits_{0}^{z_0} e^{-\int_{0}^{t} \underline{\mu}-\overline{\mu}+\beta_m\left( Z(s,z') \right)\:ds}dz'dt = \int\limits_0^{z_0} \int\limits_{z'}^{z_0} e^{-\int_{z'}^{z}(\underline{\mu}-\overline{\mu}+\beta_m(y))/b(y)\: dy} \frac{dz}{b(z)}dz'.
		\end{align*}
		Therefore, Assumption \ref{Ass-ad} is equivalent to
		\begin{align*}
		\int\limits_0^{z_0} \int\limits_{z'}^{z_0} e^{-\int_{z'}^{z}(\underline{\mu}-\overline{\mu}+\beta_m(y))/b(y)\: dy} \rec{b(z)} \:dzdz' < \infty.
		\end{align*}
	\end{enumerate}
\end{remark}

\medskip
\begin{example}\label{Ex-Ass-ad}
	\begin{enumerate}[label=(\alph*),leftmargin=*]
		\item Let $\underline{\mu} - \overline{\mu} + \beta(y) \geq C >0$ for all $y\in[0,z_0]$ and $b(z)=\frac{b_0}{z_0} z(z_0-z)$ for some $b_0$, $z_0>0$. Then,
		\begin{align*}
		&\int\limits_0^{z_0}\int\limits_{z'}^{z_0} e^{-\int_{z'}^{z} (\underline{\mu}-\overline{\mu}+\beta_m(y))/b(y)\: dy}\: \rec{b(z)} \: dzdz' \leq\\
		&\qquad\leq \frac{z_0}{b_0}\int\limits_0^{z_0}\int\limits_{z'}^{z_0} e^{-C/b_0 \int_{z'}^{z} 1/(y(z_0-y))\:dy} \frac{1}{z(z_0-z)}\: dzdz'\\
		&\qquad = \frac{z_0}{b_0} \int\limits_0^{z_0}\int\limits_{z'}^{z_0} \frac{(z_0-z)^{C/b_0-1}}{z^{C/b_0+1}} \: dz\:  \left(\frac{z'}{z_0-z'}\right)^{C/b_0}\:dz' = \frac{z_0}{C} <\infty.
		\end{align*}
		Therefore, assumption \ref{Ass-ad} is satisfied e.g. if $\beta>0$, $\mu\geq 0$ are constant and $b(z)=\frac{b_0}{z_0} z(z_0-z)$ for some $b_0$, $z_0>0$ or if $\underline{\beta}_m > \overline{\mu} - \underline{\mu}$.
		\item If there is $0<M<z_0$ such that $\inf_{z\in[M,z_0]} \underline{\mu}-\overline{\mu}+\beta_m(z) >0$ and there exist $\varepsilon\in(0,1)$, $a>0$ s.t. $b(z)\geq a z^{1+\varepsilon}$ in a neighborhood of 0, then Assumption \ref{Ass-ad} holds (cf. \cite[Proposition 7]{Campillo2016}).\label{Ex-Ass-ad-3}
	\end{enumerate}
\end{example}

\subsection{Proof of Theorem \ref{ThmExistence}}
\label{S-Proof}

\noindent The proof of Theorem \ref{ThmExistence} follows \cite{Campillo2016, Doumic2007} and extends the results to the case of non-constant cell death rate $\mu$.

\medskip
\begin{proof}[Proof of Theorem \ref{ThmExistence}] 
	$\quad$\newline
	\indent \textit{Step 1: Regularization}\\
	We introduce the regularization parameter $\varepsilon\geq 0$ and define for $z\in[0,z_0]$ and $z'\in[m,z_0]$
	\begin{align*}
	B_{\varepsilon}(z) := \beta_m(z)+\mu(z)+\varepsilon \quad \text{and} \quad \kappa_{\varepsilon}(z,z'):=\beta_m(z')k(z,z')+\frac{2\varepsilon}{z_0}.
	\end{align*}
	We consider the Banach space $\cont{0}([0,z_0])$ with the supremum norm $\Norm{\cdot}_{\infty}$.
	Let
	\begin{align*}
	\underline{\lambda}_{\varepsilon} := \inf\left\lbrace \lambda\in\IR: \int\limits_{0}^{\infty}\int\limits_{0}^{z_0}e^{-\int_{0}^{t} \lambda + B_{\varepsilon}(Z(s,z'))\:ds}\:dz'dt <\infty \right\rbrace.
	\end{align*}
	Since $\lambda + B_{\varepsilon}(Z(s,z'))\leq \lambda + \overline{\beta}_m + \overline{\mu}+\varepsilon$ for all $z'\in [0,z_0]$, $s\in [0,\infty)$,
	\begin{align*}
	\int\limits_{0}^{\infty}\int\limits_{0}^{z_0} e^{-t(\lambda+\overline{\beta}_m+\overline{\mu}+\varepsilon)}\:dz'dt \leq \int\limits_{0}^{\infty}\int\limits_{0}^{z_0}e^{-\int_{0}^{t} \lambda + B_{\varepsilon}(Z(s,z'))\:ds}\:dz'dt.
	\end{align*}
	For $\lambda \leq -(\overline{\beta}_m+\overline{\mu}+\varepsilon)$ the integral on the left-hand side diverges, therefore we know that $\underline{\lambda}_{\varepsilon}\geq -(\overline{\beta}_m+\overline{\mu}+\varepsilon)$.
	Since $\lambda + B_{\varepsilon}(Z(s,z'))\geq \underline{\mu}-\overline{\mu}+\beta_m(Z(s,z'))$ for $\lambda >-\overline{\mu}$ and for all $z'\in[0,z_0]$, $s\in[0,\infty)$, by Assumption \ref{Ass-ad},
	\begin{align*}
	\int\limits_{0}^{\infty}\int\limits_{0}^{z_0}e^{-\int_{0}^{t} \lambda + B_{\varepsilon}(Z(s,z'))\:ds}\:dz'dt \leq \int\limits_{0}^{\infty}\int\limits_{0}^{z_0}e^{-\int_{0}^{t} \underline{\mu}-\overline{\mu}+\beta_m(Z(s,z'))\:ds}\:dz'dt<\infty.
	\end{align*}
	Therefore, $\underline{\lambda}_{\varepsilon}\in[-(\overline{\beta}_m+\overline{\mu}+\varepsilon),-\overline{\mu}]$.\\
	Let $\varepsilon\geq 0$ and $\lambda >\underline{\lambda}_{\varepsilon}$ for the remainder of this proof. For $f\in\cont{0}([0,z_0])$ define the operator $G_{\lambda}^{\varepsilon}$ by
	\begin{align*}
	G_{\lambda}^{\varepsilon}[f](z) = \int\limits_{0}^{\infty}\int\limits_{0}^{z_0} \kappa_{\varepsilon}(z,Z(t,z')) f(z') e^{-\int_{0}^{t} \lambda + B_{\varepsilon}(Z(s,z'))\: ds}\:dz'dt.
	\end{align*}
	$G_{\lambda}^{\varepsilon}$ is well-defined for $\lambda>\underline{\lambda}_{\varepsilon}$, therefore it is in particular well-defined for $\lambda >-\overline{\mu}$.
	
	\indent \textit{Step 2: Compactness of} $G_{\lambda}^{\varepsilon}$\\
	The operator $G_{\lambda}^{\varepsilon}$ maps continuous functions to continuous functions as for every $z'\in [0,z_0]$, $\beta_m(z')k(z,z')$ is continuous in $z$. Furthermore, for every $f\in\cont{0}([0,z_0])$,
	\begin{align*}
	\Norm{G_{\lambda}^{\varepsilon}[f]}_{\infty} \leq \left( \overline{\beta}_m\Norm{k}_{\infty} + \frac{2\varepsilon}{z_0} \right) \Norm{f}_{\infty} \int\limits_{0}^{\infty}\int\limits_{0}^{z_0}e^{-\int_{0}^{t} \lambda + B_{\varepsilon}(Z(s,z'))\:ds}\:dz'dt,
	\end{align*}
	where the last factor is finite because $\lambda >\underline{\lambda}_{\varepsilon}$.\\
	Let $f\in\cont{0}([0,z_0])$ and $z_1$, $z_2\in[0,z_0]$, then
	\begin{align*}
	\abs{G_{\lambda}^{\varepsilon}[f](z_1)-G_{\lambda}^{\varepsilon}[f](z_2)} &\leq \overline{\beta}_m \Norm{k}_{\cont{1}(\Omega)} \abs{z_1-z_2} \Norm{f}_{\infty}\\
	&\phantom{{}\leq{}} \cdot \int\limits_{0}^{\infty}\int\limits_{0}^{z_0}e^{-\int_{0}^{t} \lambda + B_{\varepsilon}(Z(s,z'))\:ds}\:dz'dt.
	\end{align*}
	By the Theorem of Arzelà-Ascoli, the operator $G_{\lambda}^{\varepsilon}: \cont{0}([0,z_0])\rightarrow \cont{0}([0,z_0])$ is therefore compact for every $\varepsilon \geq 0$ and $\lambda > \underline{\lambda}_{\varepsilon}$.
	
	\indent \textit{Step 3: Eigenelements}\\
	For every $\varepsilon>0$, the operator $G_{\lambda}^{\varepsilon}$ is a strictly positive operator in the sense that for every $f\in \cont{0}([0,z_0])$ with $f\geq 0$, $f\neq 0$, $G_{\lambda}^{\varepsilon}[f](z)>0$ for all $z\in[0,z_0]$. As $G_{\lambda}^{\varepsilon}$ is compact and strictly positive for $\varepsilon>0$ and $\lambda>\underline{\lambda}_{\varepsilon}$, by Krein-Rutman Theorem (see e.g. \cite[Theorem 1]{Dautray1990})
	the spectral radius $r(G_{\lambda}^{\varepsilon})>0$ is a simple eigenvalue of $G_{\lambda}^{\varepsilon}$ and there is a unique positive eigenfunction $\Psi_{\lambda}^{\varepsilon}\in \cont{0}([0,z_0])$ with $\Norm{\Psi_{\lambda}^{\varepsilon}}_{\infty}=1$.\\
	The map $\lambda \mapsto r(G_{\lambda}^{\varepsilon})$ is continuous (see e.g. \cite{Campillo2016}).
	
	\indent \textit{Step 4: Fixed point of $G_{\lambda}^{0}$}\\
	Let $f\in\cont{0}([0,z_0])$ with $f\geq 0$ and $\lambda >-\overline{\mu}$. We integrate $G_{\lambda}^{\varepsilon}[f]$ w.r.t. $z$,
	\begin{align*}
	\int\limits_{0}^{z_0} G_{\lambda}^{\varepsilon}[f](z)\:dz &= \int\limits_{0}^{\infty} \int\limits_{0}^{z_0} (2\beta_m(Z(t,z'))+2\varepsilon) f(z') e^{-\int_{0}^{t} \lambda + B_{\varepsilon}(Z(s,z'))\: ds}\: dz'dt\\
	&\leq (2\overline{\beta}_m+2\varepsilon) \int\limits_{0}^{\infty}\int\limits_{0}^{z_0} f(z') e^{-t(\lambda + \underline{\beta}_m +\underline{\mu})}\: dz'dt\\
	&\leq \frac{2\overline{\beta}_m+2\varepsilon}{\lambda + \underline{\beta}_m + \underline{\mu}} \int\limits_{0}^{z_0} f(z')\: dz'.
	\end{align*}
	Let $f=\Psi_{\lambda}^{\varepsilon}$,
	\begin{align*}
	r(G_{\lambda}^{\varepsilon}) \int\limits_{0}^{z_0} \Psi_{\lambda}^{\varepsilon}(z) \: dz \leq \frac{2\overline{\beta}_m+2\varepsilon}{\lambda+ \underline{\beta}_m  + \underline{\mu}} \int\limits_{0}^{z_0} \Psi_{\lambda}^{\varepsilon}(z') \: dz'.
	\end{align*}
	Therefore, $r(G_{\lambda}^{\varepsilon})\leq 1$ for $\lambda \geq 2\overline{\beta}_m + 2\varepsilon - \underline{\beta}_m  -\underline{\mu}$.\\
	In a similar way, we obtain a lower bound for $r(G_{\lambda}^{\varepsilon})$,
	\begin{align*}
	\int\limits_{0}^{z_0} G_{\lambda}^{\varepsilon}[f](z)\:dz &= \int\limits_{0}^{\infty} \int\limits_{0}^{z_0} (2\beta_m(Z(t,z'))+2\varepsilon) f(z') e^{-\int_{0}^{t} \lambda + B_{\varepsilon}(Z(s,z'))\: ds}\: dz'dt\\
	&\geq 2\int\limits_{0}^{\infty} \int\limits_{0}^{z_0} e^{-t(\lambda+\overline{\mu})} f(z') \partial_t\left( -e^{-\int_{0}^{t}\beta_m(Z(s,z'))+\varepsilon\: ds} \right)\: dz'dt.
	\end{align*}
	Recall that we assume $\lambda>-\overline{\mu}$. Integration by parts yields
	\begin{align*}
	\int\limits_{0}^{z_0} G_{\lambda}^{\varepsilon}[f](z)\:dz &\geq 2 \int\limits_{0}^{z_0} f(z')\: dz'\\
	&\phantom{{}\geq{}}- 2 \int\limits_{0}^{\infty}\int\limits_{0}^{z_0} f(z') (\lambda+\overline{\mu}) e^{-\int_{0}^{t}\lambda +\overline{\mu}+\beta_m(Z(s,z'))+\varepsilon\: ds}\: dz'dt.
	\end{align*}
	With $f=\Psi_{\lambda}^{\varepsilon}$ and integration over $z$,
	\begin{align*}
	&r(G_{\lambda}^{\varepsilon}) \int\limits_{0}^{z_0} \Psi_{\lambda}^{\varepsilon}(z)\: dz \geq \\
	&\quad \geq 2\int\limits_{0}^{z_0} \Psi_{\lambda}^{\varepsilon}(z')\:dz' - 2z_0 (\lambda + \overline{\mu}) \int\limits_{0}^{\infty}\int\limits_{0}^{z_0} \Psi_{\lambda}^{\varepsilon}(z') e^{-\int_{0}^{t} \lambda +\overline{\mu}+\beta_m(Z(s,z'))+\varepsilon\: ds}\:dz'dt\\
	&\quad \geq 2\int\limits_{0}^{z_0} \Psi_{\lambda}^{\varepsilon}(z')\:dz' - 2z_0 (\lambda + \overline{\mu}) \int\limits_{0}^{\infty}\int\limits_{0}^{z_0} \Psi_{\lambda}^{\varepsilon}(z') e^{-\int_{0}^{t} \lambda + B_{\varepsilon}(Z(s,z'))\: ds}\:dz'dt.
	\end{align*}
	Hence, $\lim\limits_{\lambda \rightarrow -\overline{\mu}^+} r(G_{\lambda}^{\varepsilon}) \geq 2$. Due to continuity of the map $\lambda \mapsto r(G_{\lambda}^{\varepsilon})$, there is a $\lambda_{\varepsilon}\in [-\overline{\mu},2\overline{\beta}_m + 2\varepsilon -\underline{\beta}_m -\underline{\mu}]$ such that $r(G_{\lambda_{\varepsilon}}^{\varepsilon}) = 1$ for every $\varepsilon>0$. Define $\Psi_{\lambda_{\varepsilon}}^{\varepsilon}=:\Psi_{\varepsilon}$.\\
	For every $\varepsilon\in(0,1]$ the operator $G_{\lambda_{\varepsilon}}^{\varepsilon}$ has the fixed point $\Psi_{\varepsilon}\in \cont{0}([0,z_0])$ with $\Psi_{\varepsilon} >0$, $\Norm{\Psi_{\varepsilon}}_{\infty}=1$, and $\lambda_{\varepsilon}\in [-\overline{\mu},2\overline{\beta}_m + 2 -\underline{\beta}_m -\underline{\mu}]$. The family $(\Psi_{\varepsilon})_{0<\varepsilon\leq 1}$ is uniformly bounded by 1. Let $z_1$, $z_2\in [0,z_0]$, we use $\Psi_{\varepsilon} = G_{\lambda_{\varepsilon}}^{\varepsilon}(\Psi_{\varepsilon})$ and the same estimates as in the second step to obtain
	\begin{align*}
	\abs{\Psi_{\varepsilon}(z_1)-\Psi_{\varepsilon}(z_2)} &\leq \overline{\beta}_m \Norm{k}_{\cont{1}(\Omega)} \abs{z_1-z_2} \int\limits_0^{\infty}\int\limits_0^{z_0} e^{-\int_{0}^{t} \lambda_{\varepsilon} + B_{\varepsilon}(Z(s,z'))\: ds}\: dz'dt\\
	&\leq \overline{\beta}_m \Norm{k}_{\cont{1}(\Omega)} \abs{z_1-z_2} \int\limits_0^{\infty}\int\limits_0^{z_0} e^{-\int_{0}^{t} \underline{\mu}-\overline{\mu} + \beta_m(Z(s,z'))\: ds}\: dz'dt,
	\end{align*}
	where the last factor is bounded by Assumption \ref{Ass-ad}.
	Thus, the family $(\Psi_{\varepsilon})_{0<\varepsilon\leq 1}$ is equicontinuous and compact by the Theorem of Arzelà-Ascoli.
	Hence, we can extract a subsequence of $(\lambda_{\varepsilon},\Psi_{\varepsilon})_{0<\varepsilon\leq 1}$ which converges for $\varepsilon\rightarrow 0$ to $(\lambda, \Psi)\in [-\overline{\mu},2\overline{\beta}_m + 2 -\underline{\beta}_m -\underline{\mu}] \times \cont{0}([0,z_0])$ with $\Psi\geq 0$ and $\Norm{\Psi}_{\infty}=1$.
	By Assumption \ref{Ass-ad} and dominated convergence,
	\begin{align*}
	\Psi(z) = \int\limits_{0}^{\infty}\int\limits_{0}^{z_0} \beta_m(Z(t,x))k(z,Z(t,x)) \Psi(x) e^{-\int_{0}^{t} \lambda + \beta_m(Z(s,x)) + \mu(Z(s,x))\: ds}\: dxdt.
	\end{align*}
	
	\indent \textit{Step 5: Conclusion}\\
	For $z\in(0,z_0)$ define
	\begin{align}
	\mathcal{U}(z) := \frac{1}{b(z)} \int\limits_{0}^{z}\Psi(y) e^{-\int_{y}^{z}\frac{\lambda +\beta_m(s) +\mu(s)}{b(s)}\:ds}\:dy.\label{DefU}
	\end{align}
	Then, $\mathcal{U}(z)\geq 0$ for all $z\in(0,z_0)$, $\mathcal{U}\in \cont{0}((0,z_0))$, and
	\begin{align*}
	\frac{d}{dz} \left(b(z)\mathcal{U}(z)\right) = -(\lambda+\beta_m(z)+\mu(z))\mathcal{U}(z) +\Psi(z).
	\end{align*}
	With change of variables $Z(t,z')\rightarrow y$ and $Z(s,z')\rightarrow w$,
	\begin{align*}
	\Psi(z) &= \int\limits_{0}^{\infty}\int\limits_{0}^{z_0} \beta_m(Z(t,x))k(z,Z(t,x))\Psi(x) e^{-\int_{0}^{t}\lambda+\beta_m(Z(s,x))+\mu(Z(s,x))\: ds}\:dxdt\\
	&=\int\limits_{0}^{z_0} \int\limits_{z'}^{z_0} \beta_m(y)k(z,y)\Psi(x) e^{-\int_{x}^{y} \frac{\lambda + \beta_m(w)+\mu(w)}{b(w)}\:dw} \frac{1}{b(y)}\: dydx\\
	&=\int\limits_{0}^{z_0}\beta_m(y)k(z,y) \frac{1}{b(y)}\int\limits_{0}^{y}\Psi(x)e^{-\int_{x}^{y} \frac{\lambda + \beta_m(w)+\mu(w)}{b(w)}\:dw}\:dxdy\\
	&= \int\limits_{z}^{z_0}\beta_m(y)k(z,y)\mathcal{U}(y) \:dy.
	\end{align*}
	Therefore, $\mathcal{U}$ defined by \eqref{DefU} is a solution for the PDE in \eqref{EigProb}.
	Using again change of variables, $\Norm{\Psi}_{\infty}=1$, $\lambda\in [-\overline{\mu},2\overline{\beta}_m + 2 -\underline{\beta}_m -\underline{\mu}]$, and Assumption \ref{Ass-ad},
	\begin{align*}
	\int\limits_{0}^{z_0} \mathcal{U}(z) \: dz &= \int\limits_{0}^{z_0} \frac{1}{b(z)} \int\limits_{0}^{z}\Psi(y) e^{-\int_{y}^{z}\frac{\lambda +\beta_m(s) +\mu(s)}{b(s)}\:ds}\:dydz\\
	&\leq \Norm{\Psi}_{\infty} \int\limits_{0}^{z_0}\int\limits_{y}^{z_0} e^{-\int_{y}^{z} (\lambda + \beta_m(s)+\mu(s))\:\frac{ds}{b(s)}}\:\frac{dz}{b(z)}dy\\
	&= \int\limits_{0}^{z_0}\int\limits_{0}^{\infty} e^{-\int_{0}^{t} \lambda + \beta_m(Z(w,y))+\mu(Z(w,y))\: dw}\: dtdy\\
	&\leq \int\limits_{0}^{z_0}\int\limits_{0}^{\infty} e^{-\int_{0}^{t} \underline{\mu}-\overline{\mu}+\beta_m(Z(w,y))\:dw}\: dtdy <\infty.
	\end{align*}
	Therefore, $\tilde{\mathcal{U}}(z)=\frac{\mathcal{U}(z)}{\int_{0}^{z_0}\mathcal{U}(z')\:dz'}$ is a solution of \eqref{EigProb}.
\end{proof}

\section{Spectral analysis}\label{S-Spectrum}

Consider the operator $A: \mathcal{W}\subseteq L^1((0,z_0)) \rightarrow L^1((0,z_0))$ defined by \begin{align*}
A[\mathcal{U}](z) := -(b(z)\mathcal{U}(z))' - (\beta_m(z)+\mu(z))\mathcal{U}(z) + \int\limits_z^{z_0} \beta_m(z') k(z,z') \mathcal{U}(z')\: dz',
\end{align*}
where $\mathcal{W} := \left\lbrace f\in L^1((0,z_0)): \: (b\cdot f)'\in L^1((0,z_0)) \right\rbrace$ with the norm
\begin{align*}
\Norm{f}_{\mathcal{W}} := \Norm{f}_{L^1((0,z_0))} + \Norm{(b\cdot f)'}_{L^1((0,z_0))}.
\end{align*}

Let $I: \mathcal{W}\rightarrow L^1((0,z_0))$, $f\mapsto f$ be the embedding of $\mathcal{W}$ into $L^1((0,z_0))$ and define $R_{\xi}: \mathcal{W} \rightarrow L^1((0,z_0))$ by $R_{\xi} := (\xi I - A)$. $R_{\xi}$ is invertible if and only if for every $f\in L^1((0,z_0))$ there is a unique $U\in\mathcal{W}$ such that
\begin{align*}
\frac{d}{dz} (b(z)U(z)) + (\xi + \beta_m(z)+\mu(z))U(z) - \int\limits_z^{z_0} \beta_m(z') k(z,z') U(z')\: dz' = f(z). 
\end{align*}
We use the transform $v(z):= b(z)U(z)$, then $v\in \mathcal{W}_v $ and 
\begin{align*}
v'(z) + \frac{\xi+\beta_m(z)+\mu(z)}{b(z)} v(z) - \int\limits_z^{z_0} \frac{\beta_m(z') k(z,z')}{b(z')} v(z')\: dz' = f(z),
\end{align*}
where $ \mathcal{W}_v := \left\lbrace f\in L^1\left((0,z_0),\frac{dz}{b(z)}\right):\: f'\in L^1((0,z_0)) \right\rbrace$.
For the sake of brevity we define
\begin{align*}
\alpha(z) :=  \frac{\xi+\beta_m(z)+\mu(z)}{b(z)} \quad \text{and} \quad L^1_w := L^1\left( (0,z_0), \frac{dz}{b(z)}\right).
\end{align*}
The weighted $L^1$-space $L^1_w$ is a Banach space with the norm 
\begin{align*}
\Norm{f}_{L^1_w} := \int\limits_0^{z_0} \abs{f(z)} \frac{dz}{b(z)}.
\end{align*}

Variation of parameters and $v(0)=0$ (as $\lim\limits_{z\rightarrow 0^+} b(z)\mathcal{U}(z)=0$ for a solution $\mathcal{U}$ for \eqref{EigProb}) yields
\begin{align}
v(z) = \int\limits_0^z \int\limits_x^{z_0} \frac{\beta_m(z')k(x,z')}{b(z')} v(z') \: dz' \: e^{-\int_{x}^{z} \alpha(y) \: dy}\: dx + \int\limits_0^z f(x) e^{-\int_{x}^{z} \alpha(y) \: dy}\: dx.\label{Eq-v}
\end{align}
For $v\in L^1_w$ we define the operator $T_{\xi}$ by 
\begin{align*}
T_{\xi}[v](z) := \int\limits_0^z \int\limits_x^{z_0} \frac{\beta_m(z')k(x,z')}{b(z')} v(z') \: dz' \: e^{-\int_{x}^{z} \alpha(y) \: dy}\: dx.
\end{align*}
Note that we have the index $\xi$ in $T_{\xi}$ as $\alpha$ depends on $\xi$. 

\medskip
\begin{lemma}\label{Lem-Tspace}
	For $\Re(\xi)>-(\underline{\beta}_m+\underline{\mu})$, $T_{\xi}: L^1_w \rightarrow L^1_w$ is bounded with operator norm $\Norm{T_{\xi}}\leq \frac{2\overline{\beta}_m}{\Re(\xi) + \underline{\beta}_m + \underline{\mu}}$.
\end{lemma}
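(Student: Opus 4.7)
The plan is to apply Fubini's theorem to the triple integral defining $\|T_\xi[v]\|_{L^1_w}$ so that the weight $|v(z')|/b(z')$ factors out, and then reduce the remaining estimate to a pointwise bound on the ``resolvent integral'' $J(x) := \int_x^{z_0} b(z)^{-1} e^{-\int_x^z \Re(\alpha(y))\,dy}\,dz$. The factor $2$ will come from the normalization $\int_0^{z'} k(x,z')\,dx = 2$ in \ref{Assk}, and the factor $1/c$ with $c := \Re(\xi)+\underline{\beta}_m+\underline{\mu}>0$ from exponential decay along characteristics.

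First I would use $|e^{-\int_x^z \alpha(y)\,dy}| = e^{-\int_x^z \Re(\alpha(y))\,dy}$ and the triangle inequality to obtain
$$|T_\xi[v](z)| \le \int_0^z \int_x^{z_0} \frac{\beta_m(z')\,k(x,z')\,|v(z')|}{b(z')}\,dz'\; e^{-\int_x^z \Re(\alpha(y))\,dy}\,dx.$$
Dividing by $b(z)$, integrating over $z\in(0,z_0)$, and reordering the integrals — noting that $k(\cdot,z')$ is supported in $\{0\le x\le z'\}\cap\{z'\ge m\}$ — yields
$$\|T_\xi[v]\|_{L^1_w} \le \int_m^{z_0} \frac{\beta_m(z')\,|v(z')|}{b(z')}\left[\int_0^{z'} k(x,z')\,J(x)\,dx\right]dz'.$$

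The key pointwise estimate is $J(x)\le 1/c$. From the identity $b(z)\Re(\alpha(z))=\Re(\xi)+\beta_m(z)+\mu(z)$ one obtains
$$\frac{e^{-\int_x^z \Re(\alpha)}}{b(z)} \;=\; -\,\frac{1}{\Re(\xi)+\beta_m(z)+\mu(z)}\,\frac{d}{dz}\,e^{-\int_x^z \Re(\alpha(y))\,dy},$$
so whenever $\Re(\xi)+\beta_m(z)+\mu(z)\ge c$ throughout $[x,z_0]$, the fundamental theorem of calculus yields $J(x) \le \bigl(1-e^{-\int_x^{z_0}\Re(\alpha)}\bigr)/c \le 1/c$. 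Inserting this, using $\beta_m(z')\le\overline{\beta}_m$, and invoking $\int_0^{z'} k(x,z')\,dx=2$ then delivers $\|T_\xi[v]\|_{L^1_w}\le (2\overline{\beta}_m/c)\|v\|_{L^1_w}$.

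The main obstacle is that the pointwise lower bound $\Re(\xi)+\beta_m(z)+\mu(z)\ge c$ holds on $[m,z_0]$ but may fail on $[0,m)$, since $\beta_m\equiv 0$ there by \ref{Assbeta}. For $x<m$ I expect to split $J(x)=J_1(x)+J_2(x)$ at $z=m$: $J_2$ is treated as above after pulling out the multiplicative factor $e^{-\int_x^m \Re(\alpha)}$, while $J_1$ is re-expressed via the flow as $\int_0^{T_x} e^{-\int_0^t (\Re(\xi)+\mu(Z(s,x)))\,ds}\,dt$, with $T_x$ the flow-time from $x$ to $m$, and absorbed using Assumption \ref{Ass-ad} together with the smoothness of $k$ and the boundary condition $k(0,z')=0$ from \ref{Assk}. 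Confirming that the full weighted average $\int_0^{z'}k(x,z')\,J(x)\,dx$ actually achieves the claimed bound $2/c$ — rather than only the cruder bound one gets from the $x<m$ region — is the delicate step, which I expect to reduce to an integration-by-parts identity exploiting the support of $k$ and the ODE $J'(x) = \Re(\alpha(x))J(x) - 1/b(x)$.
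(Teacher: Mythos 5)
Your first two steps reproduce the paper's proof essentially verbatim: rewrite $\|T_\xi[v]\|_{L^1_w}$, apply Fubini to isolate the weight $|v(z')|/b(z')$, recognize the inner $dz$-integral
\[
J(x)=\int_x^{z_0}\frac{1}{b(z)}\,e^{-\int_x^z\Re(\alpha(y))\,dy}\,dz,
\]
bound it via $\frac{1}{b(z)}e^{-\int_x^z\Re(\alpha)}=-\bigl(\Re(\xi)+\beta_m(z)+\mu(z)\bigr)^{-1}\frac{d}{dz}e^{-\int_x^z\Re(\alpha)}$ and the fundamental theorem of calculus, then finish with $\beta_m\le\overline\beta_m$ and $\int_0^{z'}k(x,z')\,dx=2$. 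The paper does exactly this.

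The important observation in your proposal is the one you flag as the ``main obstacle'': the pointwise domination $\frac{1}{b(z)}e^{-\int_x^z\Re(\alpha)}\le-\frac{1}{\Re(\xi)+\underline\beta_m+\underline\mu}\frac{d}{dz}e^{-\int_x^z\Re(\alpha)}$ requires $\beta_m(z)+\mu(z)\ge\underline\beta_m+\underline\mu$, which fails for $z\in(0,m)$ since $\beta_m\equiv 0$ there (and indeed $\Re(\xi)+\mu(z)$ may even be negative when $\Re(\xi)$ is only slightly above $-(\underline\beta_m+\underline\mu)$, so the inequality may be sign-reversed). You should know that the paper's own proof does \emph{not} address this: it applies the pointwise inequality for all $z\in(x,z_0)$ and integrates, so it tacitly assumes the bound holds on $(0,m)$ as well. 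Your concern is therefore a genuine gap in the paper's argument, not just in yours. In fact the conclusion $J(x)\le(\Re(\xi)+\underline\beta_m+\underline\mu)^{-1}$ is simply false for $x<m$: writing $J(x)=\int_x^m\frac{1}{b(z)}e^{-\int_x^z\Re(\alpha)}\,dz+e^{-\int_x^m\Re(\alpha)}J(m)$ and using $b(0)=0$, one sees that $J(x)\to\infty$ as $x\to 0^+$; moreover for $\beta,\mu$ constant one computes $J(m)=(\Re(\xi)+\underline\beta_m+\underline\mu)^{-1}$ exactly, while $J'(x)=\Re(\alpha(x))J(x)-1/b(x)<0$ on $(0,m)$ whenever $\Re(\xi)+\mu(x)\le 0$, so $J(x)>J(m)$ there and hence $\int_0^{z'}k(x,z')J(x)\,dx>2J(m)$ for $z'$ near $m$, which already violates the claimed constant $2\overline\beta_m/(\Re(\xi)+\underline\beta_m+\underline\mu)$.

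This means the repair you sketch cannot work as stated. Splitting $J$ at $z=m$ and invoking $k(0,z')=0$ together with Assumption \ref{Ass-ad} may salvage \emph{finiteness} of the operator norm, but it cannot recover the stated bound, because the weighted average $\int_0^{z'}k(x,z')J(x)\,dx$ genuinely exceeds $2/(\Re(\xi)+\underline\beta_m+\underline\mu)$ in the regime where $J$ is larger than $J(m)$ on the support of $k(\cdot,z')$. In addition, Assumption \ref{Ass-ad} is introduced only for Theorem \ref{ThmExistence} and Lemma \ref{Lem-Tcomp}; it is not among the hypotheses of Lemma \ref{Lem-Tspace}, so a proof relying on it would establish a different statement. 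The honest conclusion is that Lemma \ref{Lem-Tspace} as stated needs either a strengthened hypothesis on $\Re(\xi)$ (for instance $\Re(\xi)>-\underline\mu$ together with a quantitative version of \ref{Ass-ad}) or a weaker, $k$-dependent bound; your identification of the gap is correct, but no integration-by-parts trick on $J'(x)=\Re(\alpha(x))J(x)-1/b(x)$ will produce the constant $2\overline\beta_m/(\Re(\xi)+\underline\beta_m+\underline\mu)$ in the range $\Re(\xi)\in(-(\underline\beta_m+\underline\mu),-\underline\mu]$.
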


\medskip
\begin{proof}
	Let $\alpha_r(z) := \Re(\alpha(z))$, $\xi_r:=\Re(\xi)$, and $v\in L^1_w$, then
	\begin{align*}
	\Norm{T_{\xi}[v]}_{L^1_w} &\leq \int\limits_0^{z_0} \rec{b(z)} \int\limits_0^{z} \int\limits_x^{z_0} \frac{\beta_m(z')k(x,z')}{b(z')} \abs{v(z')} \: dz' e^{-\int_x^z \alpha_r(y)\: dy} \:dxdz\\
	&= \int\limits_0^{z_0} \int\limits_x^{z_0} \int\limits_x^{z_0} \rec{b(z)} e^{-\int_x^z \alpha_r(y)\: dy}\: dz\: \frac{\beta_m(z')k(x,z')}{b(z')} \abs{v(z')} \:dz'dx.
	\end{align*}
	Since
	\begin{align*}
	\frac{d}{dz} \left( e^{-\int_x^z \alpha_r(y)\: dy} \right) &= -\alpha_r(z) e^{-\int_x^z \alpha_r(y)\: dy}\\
	&= -\frac{\xi_r + \beta_m(z)+ \mu(z)}{b(z)} e^{-\int_x^z \alpha_r(y)\: dy},
	\end{align*}
	we obtain for $\xi_r > -(\underline{\beta}_m+ \underline{\mu})$,
	\begin{align*}
	\rec{b(z)} e^{-\int_x^z \alpha_r(y)\: dy} \leq -\frac{d}{dz} \left( e^{-\int_x^z \alpha_r(y)\: dy} \right) (\xi_r + \underline{\beta}_m + \underline{\mu})^{-1}.
	\end{align*}
	Thus,
	\begin{align*}
	\Norm{T_{\xi}[v]}_{L^1_w} &\leq  (\xi_r + \underline{\beta}_m + \underline{\mu})^{-1} \int\limits_0^{z_0} \int\limits_x^{z_0} \frac{\beta_m(z')k(x,z')}{b(z')} \abs{v(z')} \:dz'dx\\
	&\leq \frac{\overline{\beta}_m}{\xi_r + \underline{\beta}_m + \underline{\mu}} \int\limits_0^{z_0} \int\limits_0^{z'} k(x,z')\:dx \: \frac{\abs{v(z')}}{b(z')}\: dz' \leq \frac{2\overline{\beta}_m}{\xi_r + \underline{\beta}_m + \underline{\mu}} \Norm{v}_{L^1_w}.
	\end{align*}
	Therefore, $\Norm{T_{\xi}[v]}_{L^1_w}<\infty$ for $\Re(\xi)>-(\underline{\beta}_m+\underline{\mu})$ and $T_{\xi}[v]\in L^1_w$. Furthermore, the operator norm of $T_{\xi}$ denoted by $\Norm{T_{\xi}}$ satisfies $\Norm{T_{\xi}}\leq \frac{2\overline{\beta}_m}{\xi_r + \underline{\beta}_m + \underline{\mu}}$. 
\end{proof}
	
	\bigskip
	In the following lemma we consider the connection of the spectrum of $T_{\xi}$ and $A$. We analyze the spectrum of $T_{\xi}$ and use this lemma to draw conclusions about the spectrum of $A$.

\medskip
\begin{lemma}\label{Lem-Spec}
	For every $\xi\in\IC$ it holds that $\xi$ is in the resolvent set of $A$, $\rho(A)$, if and only if $1\in \rho(T_{\xi})$ or equivalently $\xi\in\sigma(A)$ iff $1\in\sigma(T_{\xi})$ where $\sigma(A)$ denotes the spectrum of the operator $A$.
\end{lemma}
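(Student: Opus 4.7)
The plan is to reduce the resolvent equation $(\xi I - A)U = f$ on $\mathcal{W}$ to the fixed-point equation $(I - T_\xi)v = g$ on $L^1_w$ via the substitution $v(z) = b(z)U(z)$ combined with the solution operator of the pure transport part. The first step is the observation that $U \mapsto bU$ is a bijection between $\mathcal{W}$ and $\mathcal{W}_v^0 := \{v \in L^1_w : v' \in L^1,\ v(0)=0\}$. Injectivity and the inverse $v\mapsto v/b$ are immediate. The crucial point is that the boundary condition $v(0)=0$ is automatic: any $v=bU$ has $v' \in L^1$ and hence extends continuously to $z=0$, and since $b(0)=0$ with $b\in\cont{1}([0,z_0])$ makes $1/b$ non-integrable near $0$, the requirement $U = v/b \in L^1$ forces $v(0)=0$.

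Under this substitution $R_\xi U = f$ rewrites as $v' + \alpha v - Kv = f$, where $Kv(x) := \int_x^{z_0} \frac{\beta_m(z')k(x,z')}{b(z')} v(z')\, dz'$ is bounded as an operator $L^1_w \to L^\infty$. I would then introduce the solution operator for the transport part, $N_\xi : L^1 \to \mathcal{W}_v^0$ defined by $N_\xi f(z) := \int_0^z f(x)\, e^{-\int_x^z \alpha(y)\,dy}\, dx$, and check that $N_\xi$ is a bijection with inverse $v\mapsto v'+\alpha v$, and that $T_\xi$ factors as $T_\xi = N_\xi \circ K$ by direct comparison of the integrals. Applying $N_\xi$ to the equation $v' + \alpha v = Kv + f$ and integrating the $v'+\alpha v$ part by parts (exactly as in the derivation of \eqref{Eq-v}) produces the equivalent integral equation
\begin{align*}
(I - T_\xi)v = N_\xi f \qquad \text{in } \mathcal{W}_v^0.
\end{align*}
Chaining the two bijections $U\leftrightarrow v$ and $f\leftrightarrow N_\xi f$ shows that $R_\xi : \mathcal{W}\to L^1$ is invertible if and only if $(I-T_\xi) : \mathcal{W}_v^0 \to \mathcal{W}_v^0$ is invertible.

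What remains is to lift this equivalence to the full space $L^1_w$. The key regularizing observation is $\operatorname{Range}(T_\xi) \subset \mathcal{W}_v^0$, inherited from $\operatorname{Range}(N_\xi) = \mathcal{W}_v^0$. For injectivity, any $v\in L^1_w$ fixed by $T_\xi$ already belongs to $\mathcal{W}_v^0$, so injectivity on the two spaces coincides. For surjectivity: given $g\in L^1_w$, $T_\xi g \in \mathcal{W}_v^0$, and if $(I-T_\xi)$ is surjective on $\mathcal{W}_v^0$ one finds $w\in\mathcal{W}_v^0$ with $(I-T_\xi)w=T_\xi g$; then $v:=g+w$ satisfies $(I-T_\xi)v=g$. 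The converse is automatic since any solution of $(I-T_\xi)v=g$ with $g\in\mathcal{W}_v^0$ lies in $\mathcal{W}_v^0$ via $v=g+T_\xi v$. Concatenating yields $\xi \in \rho(A) \Leftrightarrow (I-T_\xi) \text{ invertible on } L^1_w \Leftrightarrow 1 \in \rho(T_\xi)$, and the complementary statement about $\sigma(A)$ follows by negation.

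The two delicate points are the automaticity of the boundary condition $v(0)=0$ (which rests on $b(0)=0$ together with the non-integrability of $1/b$ near $0$) and the lifting step from $\mathcal{W}_v^0$ to $L^1_w$ (which relies on the factorization $T_\xi = N_\xi K$ and its built-in regularization); everything else is straightforward bookkeeping with bounded linear operators, naturally carried out in the regime $\Re(\xi) > -(\underline{\beta}_m+\underline{\mu})$ where Lemma~\ref{Lem-Tspace} guarantees boundedness of $T_\xi$ on $L^1_w$.
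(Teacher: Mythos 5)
Your proposal is correct and follows the same route as the paper: reduce $(\xi I - A)U = f$ to $(I - T_\xi)v = N_\xi f$ via $v = bU$ and variation of parameters, exactly as in the derivation of \eqref{Eq-v}. Where the paper compresses this into a single sentence, you have made explicit two details it conceals: that $v(0)=0$ is automatic for $v=bU$ with $U\in\mathcal{W}$ (since $b(0)=0$ makes $1/b$ non-integrable near $0$), and that invertibility of $I-T_\xi$ on $\mathcal{W}_v^0$ and on all of $L^1_w$ coincide because $\operatorname{Range}(T_\xi)\subset\mathcal{W}_v^0$ via the factorization $T_\xi=N_\xi K$; both are handled correctly. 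The one point to watch — shared with the paper rather than a defect of your argument — is that the identity $N_\xi(v'+\alpha v)=v$ on $\mathcal{W}_v^0$ needs the boundary term $\lim_{x\to 0^+}v(x)e^{-\int_x^z\alpha(y)\,dy}$ to vanish; this is immediate once $\Re(\xi)\geq-\underline{\mu}$, where $\Re(\alpha)\geq 0$ near $0$, but deserves a remark for $\Re(\xi)\in(-(\underline{\beta}_m+\underline{\mu}),-\underline{\mu})$ where $\Re(\alpha)$ may be negative near the origin.
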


\medskip
\begin{proof}
	Let $\xi\in \IC$, then $\xi\in \rho(A)$ if and only if the operator $R_{\xi}: \mathcal{W} \rightarrow L^1((0,z_0))$ defined by $R_{\xi} := (\xi I - A) $ is invertible, its inverse exists and is an everywhere defined bounded linear operator \cite[p. 162]{Webb1985}.
	
	By the transformation $v(z) = b(z)u(z)$ we see that $R_{\xi}$ is invertible if and only if $(I-T_{\xi})$ is invertible (see equation \eqref{Eq-v}) which holds iff $1\in\rho(T_{\xi})$. 
\end{proof}

\medskip
\begin{lemma}\label{Lem-Res}
	If $\xi\in\IC$ with $\Re(\xi)>2\overline{\beta}_m - \underline{\beta}_m -\underline{\mu}$, then $\xi\in\rho(A)$.
\end{lemma}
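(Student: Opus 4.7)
The plan is to chain together the two previous lemmas with a standard Neumann-series argument. By Lemma \ref{Lem-Spec}, it suffices to verify $1\in\rho(T_\xi)$, and by Lemma \ref{Lem-Tspace} the operator norm of $T_\xi$ satisfies $\Norm{T_\xi}\le \frac{2\overline{\beta}_m}{\Re(\xi)+\underline{\beta}_m+\underline{\mu}}$ whenever $\Re(\xi)>-(\underline{\beta}_m+\underline{\mu})$, so the strategy is to show this bound is strictly less than $1$ under the stated hypothesis and then conclude $I-T_\xi$ is invertible on $L^1_w$.

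First I would check that the hypothesis $\Re(\xi)>2\overline{\beta}_m-\underline{\beta}_m-\underline{\mu}$ is compatible with the regime in which Lemma \ref{Lem-Tspace} applies: since $\overline{\beta}_m\ge \underline{\beta}_m>0$ we have $2\overline{\beta}_m-\underline{\beta}_m-\underline{\mu}\ge \overline{\beta}_m-\underline{\mu}\ge -\underline{\mu}>-(\underline{\beta}_m+\underline{\mu})$, so $T_\xi$ is a well-defined bounded operator on $L^1_w$ and the norm estimate of Lemma \ref{Lem-Tspace} is available.

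Next I would rearrange the hypothesis: $\Re(\xi)>2\overline{\beta}_m-\underline{\beta}_m-\underline{\mu}$ is equivalent to $\Re(\xi)+\underline{\beta}_m+\underline{\mu}>2\overline{\beta}_m$, hence
\begin{align*}
\Norm{T_\xi} \le \frac{2\overline{\beta}_m}{\Re(\xi)+\underline{\beta}_m+\underline{\mu}} <1.
\end{align*}
Therefore the Neumann series $\sum_{n=0}^\infty T_\xi^n$ converges in the operator norm on $L^1_w$ and provides a bounded inverse for $I-T_\xi$, showing $1\in\rho(T_\xi)$. Finally, Lemma \ref{Lem-Spec} translates this back to $\xi\in\rho(A)$, completing the proof.

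There is essentially no obstacle here—everything has been set up by Lemmas \ref{Lem-Tspace} and \ref{Lem-Spec}. The only small care required is the algebraic check that the hypothesis is strong enough both to give $\Re(\xi)>-(\underline{\beta}_m+\underline{\mu})$ (so that Lemma \ref{Lem-Tspace} applies at all) and to force $\Norm{T_\xi}<1$ (so that the Neumann series converges); both follow immediately from $\overline{\beta}_m\ge \underline{\beta}_m$.
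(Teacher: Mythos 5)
Your proof is correct and follows exactly the same route as the paper: apply the operator-norm bound from Lemma \ref{Lem-Tspace}, observe that the hypothesis forces $\Norm{T_\xi}<1$ so $I-T_\xi$ is invertible, and then transfer back to $\rho(A)$ via Lemma \ref{Lem-Spec}. The only addition is your explicit check that the hypothesis also guarantees $\Re(\xi)>-(\underline{\beta}_m+\underline{\mu})$, which the paper leaves implicit.
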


\medskip
\begin{proof}				
	By Lemma \ref{Lem-Tspace},
	\begin{align*}
	\Norm{T_{\xi}} \leq \frac{2\overline{\beta}_m}{\xi_r + \underline{\beta}_m+\underline{\mu}},
	\end{align*}
	where $\xi_r = \Re(\xi)$.
	If $\Norm{T_{\xi}} < 1 $, then $1\in \rho(T_{\xi})$ and therefore $\xi \in\rho(A)$ by Lemma \ref{Lem-Spec}. For $\xi_r > 2\overline{\beta}_m - \underline{\beta}_m-\underline{\mu}$, $\Norm{T_{\xi}} < 1 $. 
\end{proof}

\medskip
\begin{lemma}\label{Lem-Tcomp}
	Let $\Re(\xi)=\xi_r > -(\underline{\beta}_m + \underline{\mu})$ and assume \ref{Ass-ad} holds, then the operator $T_{\xi}$ is compact.
\end{lemma}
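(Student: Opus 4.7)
My plan is to prove compactness of $T_\xi$ by factoring it through $L^\infty([0, z_0])$ as $T_\xi = R \circ S$, where $S$ is bounded and $R$ is compact by a weak-$*$/dominated-convergence argument; the compactness is gained from the smoothing effect of the transport integration in $R$.

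Define
\begin{align*}
S[v](x) &:= \int_x^{z_0} \frac{\beta_m(z')\,k(x, z')}{b(z')}\, v(z')\, dz', \\
R[f](z) &:= \int_0^z f(x)\, e^{-\int_x^z \alpha(y)\, dy}\, dx,
\end{align*}
and observe $T_\xi = R \circ S$ by Fubini. The boundedness of $S: L^1_w \to L^\infty([0, z_0])$ is immediate from $\Norm{S[v]}_\infty \leq \overline{\beta}_m \Norm{k}_\infty \Norm{v}_{L^1_w}$, while the boundedness of $R: L^\infty \to L^1_w$ follows from the same integration estimate already used in the proof of Lemma \ref{Lem-Tspace}, yielding $\Norm{R[f]}_{L^1_w} \leq z_0\, (\xi_r + \underline{\beta}_m + \underline{\mu})^{-1} \Norm{f}_\infty$.

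The heart of the proof is the compactness of $R$. Given a bounded sequence $(f_n) \subset L^\infty$, the separability of the predual $L^1((0, z_0))$ together with Banach-Alaoglu yields a subsequence (not relabeled) converging weak-$*$ to some $f \in L^\infty$. For each fixed $z \in (0, z_0)$, the kernel $x \mapsto \chi_{(0, z)}(x)\, e^{-\int_x^z \alpha(y)\, dy}$ lies in $L^1((0, z_0))$, so testing weak-$*$ convergence against it gives $R[f_n](z) \to R[f](z)$ pointwise. Moreover, $\abs{R[f_n](z) - R[f](z)}/b(z)$ is uniformly bounded by the function $2 \sup_n \Norm{f_n}_\infty \cdot b(z)^{-1} \int_0^z e^{-\int_x^z \alpha_r(y)\, dy}\, dx$, which is integrable on $(0, z_0)$ by the estimate used in $\Norm{R}$. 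Dominated convergence then delivers $\Norm{R[f_n] - R[f]}_{L^1_w} \to 0$, so $R$ is compact. Consequently $T_\xi = R \circ S$ is the composition of a bounded and a compact operator, hence compact.

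The main subtlety I expect lies in the weak-$*$ step: carefully verifying that, for every $z \in (0, z_0)$, the test-kernel $x \mapsto \chi_{(0,z)}(x)\, e^{-\int_x^z \alpha(y)\, dy}$ is genuinely in $L^1((0, z_0))$ despite the blow-up of $1/b(y)$ at the endpoints, and that the dominating function for DCT is globally integrable on $(0, z_0)$. Both integrability statements reduce to the estimate already developed in Lemma \ref{Lem-Tspace}, and assumption \ref{Ass-ad} is what ensures the finiteness of the exponential integrals along the flow near the singularities of $b$ at $z = 0$ and $z = z_0$.
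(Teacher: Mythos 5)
Your proof is correct and takes a genuinely different route from the paper. The paper proves compactness of $T_{\xi}$ via the Kolmogorov--Riesz--Fréchet theorem, extending the kernels by zero to $\IR$ and estimating $\Norm{T_{\xi,h}[v]-T_{\xi}[v]}_{L^1_w}$ as $h\to 0$ uniformly over the unit ball of $L^1_w$; Assumption \ref{Ass-ad} enters there to keep the translated exponential factor under control. Your proof instead factors $T_{\xi}=R\circ S$ through $L^{\infty}((0,z_0))$: the fragmentation integral $S:L^1_w\to L^{\infty}$ is bounded because $\beta_m(z')k(x,z')$ is bounded, and the transport-resolvent integral $R:L^{\infty}\to L^1_w$ is compact by sequential Banach--Alaoglu (separability of the predual $L^1$) plus dominated convergence, where the dominating function is exactly the one controlled by the integration-by-parts estimate from Lemma \ref{Lem-Tspace}. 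This is conceptually cleaner in that it isolates the compactifying "smoothing'' in the single operator $R$ and avoids the translation analysis entirely; in particular your argument does not appear to invoke Assumption \ref{Ass-ad}, so, taken at face value, it proves compactness under the weaker hypothesis $\Re(\xi)>-(\underline{\beta}_m+\underline{\mu})$ alone. One small point you should tighten: the test kernel $x\mapsto\chi_{(0,z)}(x)e^{-\int_x^z\alpha(y)\,dy}$ need not lie in $L^1((0,z_0))$ for \emph{every} $z$ (the sign of $\alpha_r$ can be negative on $(0,m)$ when $-(\underline{\beta}_m+\underline{\mu})<\Re(\xi)<-\underline{\mu}$), but Lemma \ref{Lem-Tspace} guarantees $\int_0^{z_0} b(z)^{-1}\int_0^z e^{-\int_x^z\alpha_r(y)\,dy}\,dx\,dz<\infty$, hence $L^1$-membership for almost every $z$, which is all that the dominated convergence step requires; it is worth stating "a.e.\ $z$'' explicitly rather than "each fixed $z$''.
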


\medskip
\begin{proof}
	We want to apply the Kolmogorov-Riesz-Fréchet Theorem to prove compactness of $T_{\xi}$ (see e.g. \cite[Theorem 4.26]{Brezis2010}).
	To this end, we extend $\beta_m$, $\mu$, $k$, $b$, and $\alpha$ to $\IR$ by setting them to 0 outside of the interval $[0,z_0]$ respectively for $k$ outside of $\Omega$.
	
	The operator $T_{\xi}$ is a bounded linear operator for $\xi_r > -(\underline{\beta}_m + \underline{\mu})$ (see Lemma \ref{Lem-Tspace}).
	Let $v\in L^1_w$, $\Norm{v}_{L^1_w}\leq 1$, $h>0$ (the case $h<0$ is analogous) and $T_{\xi,\: h}[v](\cdot) := T_{\xi}[v](\cdot + h)$,
	\begin{align*}
		&\Norm{T_{\xi, \: h}[v] - T_{\xi}[v]}_{L^1_w} = \int\limits_0^{z_0} \rec{b(z)} \left\vert \int\limits_0^{z+h}\int\limits_x^{z_0} \frac{\beta_m(z')k(x,z')}{b(z')} v(z') \: dz'  e^{-\int_x^{z+h} \alpha(y)\: dy} \: dx \right.\\
		&\:\mathrel{\phantom{\leq}} \left. - \int\limits_0^{z}\int\limits_x^{z_0} \frac{\beta_m(z')k(x,z')}{b(z')} v(z') \: dz' e^{-\int_x^{z} \alpha(y)\: dy} \: dx \right\vert \: dz\\
		&\:\leq \int\limits_0^{z_0} \rec{b(z)} \left(\int\limits_0^z \int\limits_x^{z_0} \frac{\beta_m(z')k(x,z')}{b(z')} \abs{v(z')} \: dz' \abs{e^{-\int_x^{z+h} \alpha(y)\: dy}-e^{-\int_x^{z} \alpha(y)\: dy}}\: dx \right.\\
		&\:\mathrel{\phantom{\leq}}\left.+ \int\limits_z^{z+h} \int\limits_x^{z_0} \frac{\beta_m(z')k(x,z')}{b(z')} \abs{v(z')} \: dz' \abs{e^{-\int_x^{z+h} \alpha(y)\: dy}}\: dx\right) dz\\
		&\:\leq \overline{\beta}_m \Norm{k}_{\infty} \left( \int\limits_0^{z_0}\int\limits_0^z \rec{b(z)} e^{-\int_x^{z} \alpha_r(y)\: dy} \abs{e^{-\int_z^{z+h} \alpha(y)\: dy}-1}\: dxdz\right. \\
		&\:\mathrel{\phantom{\leq}} \left.+ \int\limits_0^{z_0} \int\limits_{z}^{z+h} \rec{b(z)} e^{-\int_x^{z+h} \alpha_r(y)\: dy}\: dxdz \right)\:\xrightarrow[]{h\rightarrow 0^+} 0,
	\end{align*}
	uniformly in $v\in L^1_w$ with $\Norm{v}_{L^1_w}\leq 1$. Assumption \ref{Ass-ad} ensures that the first summand is bounded for every $h>0$. Therefore, $T_{\xi}$ is a compact operator. 
\end{proof}

\medskip
\begin{lemma}\label{Lem-Tsrad}
	Let $\xi\in\IR$ with $\xi>-(\underline{\beta}_m+\underline{\mu})$, then $r(T_{\xi})\in \sigma(T_\xi)$ and if $r(T_{\xi})>0$ then it is an eigenvalue of $T_{\xi}$ with a non-negative eigenfunction. 
\end{lemma}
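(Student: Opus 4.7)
The plan is to identify $T_{\xi}$ as a positive compact operator on the Banach lattice $L^1_w$ and then invoke a Krein-Rutman type theorem. We equip $L^1_w$ with the standard a.e.\ pointwise ordering, with positive cone $K := \{ v \in L^1_w : v \ge 0 \text{ a.e.}\}$; this makes $L^1_w$ a Banach lattice. For $\xi \in \IR$, the weight $\alpha(z) = (\xi + \beta_m(z) + \mu(z))/b(z)$ is real-valued, and each of the factors $\beta_m(z')k(x,z')/b(z')$ and $e^{-\int_x^z \alpha(y)\,dy}$ in the kernel of $T_{\xi}$ is non-negative. Hence $T_{\xi}[v] \ge 0$ whenever $v \ge 0$, so $T_{\xi}$ is positive. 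By Lemma \ref{Lem-Tspace} and Lemma \ref{Lem-Tcomp}, $T_{\xi}$ is bounded and compact under the hypothesis $\xi > -(\underline{\beta}_m + \underline{\mu})$.

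For the first assertion $r(T_{\xi}) \in \sigma(T_{\xi})$, I would appeal to the general fact that a positive bounded linear operator on a Banach lattice has its spectral radius in its spectrum. In our situation one can also argue directly in the degenerate case $r(T_{\xi}) = 0$: since $T_{\xi}$ is compact on the infinite-dimensional space $L^1_w$, it is not invertible, so $0 \in \sigma(T_{\xi})$, which combined with $\sigma(T_{\xi}) \subseteq \overline{B}(0, r(T_{\xi}))$ forces $r(T_{\xi}) = 0 \in \sigma(T_{\xi})$. Thus only the case $r(T_{\xi}) > 0$ requires genuine work.

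For $r(T_{\xi}) > 0$, I would invoke the Krein-Rutman theorem for positive compact operators on a Banach lattice to conclude that $r(T_{\xi})$ is an eigenvalue of $T_{\xi}$ with a non-negative eigenfunction $v \in L^1_w \setminus \{0\}$, i.e. $T_{\xi} v = r(T_{\xi}) v$ with $v \ge 0$ a.e. The main obstacle is that the classical Krein-Rutman theorem cited earlier in the proof of Theorem \ref{ThmExistence} (as in \cite{Dautray1990}) requires the positive cone to have non-empty interior, which is satisfied in $\cont{0}([0,z_0])$ but fails in $L^1_w$. I would therefore appeal to the extension of Krein-Rutman due to Schaefer (or de Pagter) valid on general Banach lattices: every positive compact operator with strictly positive spectral radius admits $r(T)$ as an eigenvalue with a corresponding non-negative eigenvector. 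Applying this version to $T_{\xi}$ yields both conclusions of the lemma.
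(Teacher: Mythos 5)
Your proposal is correct and follows essentially the same route as the paper: view $T_\xi$ as a positive compact operator on the ordered Banach space $L^1_w$ and invoke Banach-lattice versions of the spectral-radius-in-spectrum result and of Krein–Rutman that do not require the positive cone to have non-empty interior. The paper cites Bonsall for the first assertion and Heijmans's Theorem 5.1 for the second; your observation that the strong Krein–Rutman from Dautray–Lions would fail in $L^1_w$ is exactly the reason those alternative references are needed, and your appeal to Schaefer/de Pagter supplies the same content.
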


\medskip
\begin{proof}
	For the first part of the Lemma we use a Theorem by Bonsall \cite[Theorem 1]{Bonsall1955}. 
	$L^1_w$ is a partially ordered Banach space with the relation $f\leq g$ iff $f(z) \leq g(z)$ for a.e. $z\in(0,z_0)$ and $L^{1\: +}_w:=\{ f\in L^1_w: \: f\geq 0 \text{ a.e.}\}$ is a normal cone as it is a non-empty, closed set and $\Norm{f+g}_{L^1_w}\geq \Norm{f}_{L^1_w}$ for all $f$, $g\in L^{1\: +}_w$. It holds that $L^1_w = L^{1\: +}_w - L^{1\: +}_w$. $T_{\xi}$ is a bounded linear operator and for $v\in L^1_w$ with $v\geq 0$ a.e.,
	\begin{align*}
	T_{\xi}[v](z) &= \int\limits_0^z \int\limits_x^{z_0} \frac{\beta_m(z')k(x,z')}{b(z')} v(z') \: dz' \: e^{-\int_{x}^{z} \alpha(y) \: dy}\: dx \geq 0 \text{ for all }z\in(0,z_0).
	\end{align*}
	Therefore, $T_{\xi}: L^1_w \rightarrow L^1_w$ is a endormorphism (in the sense of \cite{Bonsall1955}) and $r(T_{\xi})\in\sigma(T_{\xi})$ by \cite[Theorem 1]{Bonsall1955}. 
	
	By Lemma \ref{Lem-Tcomp}, $T_{\xi}$ is a compact operator. Since $T_{\xi}$ is a positive operator and $L^1_w = L^{1\:+}_w - L^{1\:+}_w$ the second part of the Lemma follows directly from Krein-Rutman Theorem (see e.g. \cite[Theorem 5.1]{Heijmans1986}).
\end{proof} 

\bigskip
\noindent We make an additional assumption on the plasmid segregation kernel $k$:
\begin{enumerate}[label=(A\arabic*), leftmargin=*]
	\setcounter{enumi}{5}
	\item $k(z,z')>0$ for all $z\in(0,z')$, $z'\in(m,z_0)$.\label{Ass-k2}
\end{enumerate}
We will use this assumption to show that the spectral radius $r(T_{\xi})$ is a simple eigenvalue of $T_{\xi}$ and thereby we obtain that the dominant real eigenvalue is simple.

\medskip
\begin{example}
	Assumption \ref{Ass-k2} holds e.g. if $k(z,z')=\frac{2}{z'} \Phi\left(\frac{z}{z'}\right) \chi_{\Omega}(z,z')$ and $\Phi(\xi) >0$ for all $\xi\in(0,1)$.
\end{example}

\medskip
Usually, one uses e.g. the Krein-Rutman Theorem in its strong form for strictly positive operators (see e.g. \cite[Theorem 1]{Dautray1990}) 
or the theory of non-support operators developed by Sawashima \cite{Sawashima1964} to show that the spectral radius is a simple eigenvalue \cite{Heijmans1986}. However, the operator $T_{\xi}$ is neither strictly positive nor a non-support operator as it maps every function with essential support in $[0,m]$ to 0. Therefore, in the following lemma we consider the operator $T_{\xi}$ on the interval $[m,z_0]$ and then use the strong Krein-Rutman Theorem.

\medskip
\begin{lemma}\label{Lem-Uniqueness}
	Let $\xi\in\IR$ with $\xi>-(\underline{\beta}_m+\underline{\mu})$. If $r(T_{\xi})>0$ then $r(T_{\xi})$ is a simple eigenvalue of $T_{\xi}$, i.e. there is a unique non-negative eigenfunction for $r(T_{\xi})$. 
\end{lemma}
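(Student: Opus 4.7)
The plan is to circumvent the failure of strict positivity of $T_\xi$ by restricting to functions supported on $[m, z_0]$, and then invoke the strong Krein--Rutman theorem. The crucial observation is that any $v \in L^1_w$ with essential support in $[0, m]$ satisfies $T_\xi v \equiv 0$, because the integrand $\beta_m(z')k(\cdot, z')$ vanishes for $z' < m$. Hence the non-zero spectral behaviour of $T_\xi$ is governed by its action on functions supported in $[m, z_0]$.

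Concretely, let $J$ denote extension by zero from $L^1((m,z_0), dz/b(z))$ into $L^1_w$, and $P$ denote restriction in the reverse direction; set $\tilde{T}_\xi := P T_\xi J$. The first step is to establish a bijective correspondence between non-zero eigenpairs of $T_\xi$ and those of $\tilde{T}_\xi$. Given $T_\xi w = \lambda w$ with $\lambda \neq 0$, decompose $w = w_1 + J(Pw)$ where $w_1$ has support in $[0, m]$; the observation above gives $T_\xi w_1 = 0$, so $T_\xi(J P w) = \lambda w$, and applying $P$ yields $\tilde{T}_\xi(Pw) = \lambda (Pw)$. Conversely, any eigenpair $(\lambda, w_2)$ of $\tilde{T}_\xi$ with $\lambda \neq 0$ lifts to $(\lambda, Jw_2 + \lambda^{-1}(T_\xi J w_2)|_{[0,m]})$. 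This correspondence preserves multiplicities of eigenspaces, so simplicity of $r(T_\xi)$ reduces to simplicity of $r(\tilde{T}_\xi)$, and both spectral radii coincide (being the largest eigenvalues by Lemma \ref{Lem-Tsrad}).

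Next, I verify strict positivity of $\tilde{T}_\xi$. For $v \geq 0$, $v \neq 0$ on $(m, z_0)$, assumption \ref{Ass-k2} gives $k(x, z') > 0$ for every $x \in (0, z')$ and $z' \in (m, z_0)$, so the inner integral $\int_{\max\{x, m\}}^{z_0} \beta(z')k(x, z')v(z')/b(z')\, dz'$ is strictly positive for every $x \in (0, z_0)$; hence $\tilde{T}_\xi v(z) > 0$ for every $z \in (m, z_0)$. Compactness of $\tilde{T}_\xi$ is inherited from Lemma \ref{Lem-Tcomp} since $J$ and $P$ are bounded linear. To apply the strong Krein--Rutman theorem \cite[Theorem 1]{Dautray1990}, which requires a cone with non-empty interior, I realize $\tilde{T}_\xi$ on $\cont{0}([m, z_0])$: the smoothing of the integral operator (analogous to Step 2 of the proof of Theorem \ref{ThmExistence}) ensures $\tilde{T}_\xi$ maps into continuous functions, and the pointwise strict positivity above together with continuity places $\tilde{T}_\xi v$ in the interior of the positive cone of $\cont{0}([m, z_0])$. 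The strong Krein--Rutman theorem then gives that $r(\tilde{T}_\xi)$ is a simple eigenvalue with a one-dimensional non-negative eigenspace.

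The main obstacle is ensuring that this simplicity result transfers faithfully between the $\cont{0}$-realization used to invoke Krein--Rutman and the $L^1_w$-setting in which $T_\xi$ is defined. This requires verifying that the spectral radius and the dimension of the $r(\tilde{T}_\xi)$-eigenspace are the same in both spaces, which follows from compactness and the fact that any $L^1_w$-eigenfunction of $\tilde{T}_\xi$ for $r(\tilde{T}_\xi) > 0$ is automatically continuous (being in the image of $\tilde{T}_\xi$). Combining this with the bijective eigenpair correspondence of the second step, one concludes that the eigenspace of $T_\xi$ for $r(T_\xi)$ is one-dimensional, yielding a unique non-negative eigenfunction and proving the lemma.
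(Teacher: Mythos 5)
Your proof proposal mirrors the paper's overall strategy — restrict $T_\xi$ to $[m,z_0]$ to recover strict positivity, establish a correspondence between non-zero eigenpairs of $T_\xi$ and of the restricted operator, then apply a Krein--Rutman type theorem. However, there is a genuine gap in your application of the \emph{strong} Krein--Rutman theorem from \cite[Theorem 1]{Dautray1990}.

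You move to $\cont{0}([m,z_0])$ precisely because the positive cone of $L^1_w$ has empty interior. But the strong Krein--Rutman theorem requires that $\tilde{T}_\xi$ map every non-zero non-negative $v$ into the \emph{interior} of the cone of $\cont{0}([m,z_0])$, i.e. $\tilde{T}_\xi v(z) > 0$ for \emph{every} $z\in[m,z_0]$, including the endpoint $z_0$. Your strict-positivity argument only covers $z\in(m,z_0)$, and the claim that ``pointwise strict positivity together with continuity places $\tilde{T}_\xi v$ in the interior of the cone'' is a non sequitur at $z=z_0$. In fact positivity \emph{fails} there: for $\xi>-(\underline{\beta}_m+\underline{\mu})$ one has $\xi+\beta_m(y)+\mu(y)\geq c_0>0$ for $y\in[m,z_0]$, while $b\in\cont{1}$ with $b(z_0)=0$ gives $b(y)\leq\Norm{b'}_\infty(z_0-y)$, so $\int_x^{z_0}\alpha(y)\,dy\geq\frac{c_0}{\Norm{b'}_\infty}\int_x^{z_0}\frac{dy}{z_0-y}=+\infty$ for every $x<z_0$, hence $e^{-\int_x^{z_0}\alpha(y)dy}=0$ and $\tilde{T}_\xi v(z_0)=0$. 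The range of $\tilde{T}_\xi$ therefore sits in $\{f\in\cont{0}([m,z_0]): f(z_0)=0\}$, whose positive cone again has empty interior, so the detour through $\cont{0}$ cannot be salvaged in this form. The paper avoids the issue by staying in $\tilde{L}^1_w$ and invoking the Krein--Rutman/irreducibility machinery of Heijmans (cf.\ \cite{Heijmans1986}, Sawashima's non-support-operator theory), where ``strictly positive'' means a.e.\ positive and no interior point of the cone is needed. To close the gap you should either cite a Krein--Rutman-type theorem formulated for $L^1$ / Banach lattices with generating cone and irreducibility, as the paper does, or explicitly establish irreducibility of $\tilde{T}_\xi$ on $\tilde{L}^1_w$ rather than attempting to land in the interior of the $\cont{0}$ cone.
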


\medskip
\begin{proof}
	We restrict the operator $T_{\xi}$ to the interval $[m,z_0]$, i.e. we consider the operator $\tilde{T}_{\xi}[v](z) := \chi_{[m,z_0]}(z) T_{\xi}[v](z)$ which maps $\tilde{L}^1_w:= L^1\left((m,z_0),\frac{dz}{b(z)}\right)$ to $\tilde{L}^1_w$.
	
	\indent \textit{Step 1: $r(\tilde{T}_{\xi})$ is a simple eigenvalue of $\tilde{T}_{\xi}$}\\
	The operator $\tilde{T}_{\xi}$ is compact if $\Re(\xi)>-(\underline{\beta}_m + \underline{\mu})$ (this can be shown as compactness of $T_{\xi}$ in Lemma \ref{Lem-Tcomp}). Let $\xi\in\IR$ and $v\in \tilde{L}^1_w$, $v\geq 0$, $v\neq 0$, then by assumption \ref{Ass-k2} for all $x<m$
	\begin{align*}
	\int\limits_x^{z_0} \frac{\beta_m(z') k(x,z')}{b(z')} v(z') \: dz' > 0.
	\end{align*}
	Therefore, for all $z\in(m,z_0)$
	\begin{align*}
	\tilde{T}_{\xi}[v](z) = \chi_{[m,z_0]}(z)\int\limits_0^z \int\limits_x^{z_0} \frac{\beta_m(z') k(x,z')}{b(z')} v(z') \: dz'\: e^{-\int_x^z \alpha(y)\: dy}\: dx > 0
	\end{align*}
	and $\tilde{T}_{\xi}$ is a strictly positive operator for $\xi\in\IR$. By the Krein-Rutman Theorem (see e.g. \cite{Heijmans1986}) $\tilde{r}:= r(\tilde{T}_{\xi})$ is a simple eigenvalue of $\tilde{T}_{\xi}$. 
	
	\textit{Step 2: $\lambda\in\IC\setminus\{0\}$ is an eigenvalue of $\tilde{T}_{\xi}$ iff $\lambda$ is an eigenvalue of $T_{\xi}$}\\
	Let $\lambda$ be an eigenvalue of $T_{\xi}$ with eigenfunction $v$, then $\lambda$ is an eigenvalue of $\tilde{T}_{\xi}$ with eigenfunction $\tilde{v}:= \left. v\right\vert_{[m,z_0]}$.\\
	Let $\lambda$ be an eigenvalue of $\tilde{T}_{\xi}$ with eigenfunction $\tilde{v}$. Define $v:= \rec{\lambda} T_{\xi}[\tilde{v}]$, then $v\in L^1_w$, $\tilde{v}= \left.v\right\vert_{[m,z_0]}$, and
	\begin{align*}
	T_{\xi}[v] = T_{\xi}[\left.v\right\vert_{[m,z_0]}] = T_{\xi}[\tilde{v}]=\lambda v.
	\end{align*}
	Therefore, $v$ is an eigenfunction of $T_{\xi}$ with eigenvalue $\lambda$.
	
	\textit{Step 3: A simple non-zero eigenvalue of $\tilde{T}_{\xi}$ is also a simple eigenvalue of $T_{\xi}$}\\
	Let $\lambda\in\IC\setminus\{0\}$ be a simple eigenvalue of $\tilde{T}_{\xi}$ with unique eigenfunction. From the previous step we already know that $\lambda$ is also an eigenvalue of $T_{\xi}$. Assume there are two different eigenfunctions $v_1$, $v_2\in L^1_w$ for the eigenvalue $\lambda$. Then $\left. v_1 \right\vert_{[m,z_0]}$ and $\left. v_2 \right\vert_{[m,z_0]}$ are eigenfunctions of $\tilde{T}_{\xi}$ for $\lambda$. Therefore, as $\lambda$ is a simple eigenvalue $\left. v_1 \right\vert_{[m,z_0]}= c \left. v_2 \right\vert_{[m,z_0]}$ for some $c\in\IC\setminus\{0\}$. W.l.o.g. $c=1$. 
	Hence,
	\begin{align*}
	\lambda v_1 = T_{\xi}[v_1] = T_{\xi}[\left. v_1 \right\vert_{[m,z_0]}] = T_{\xi}[\left. v_2 \right\vert_{[m,z_0]}] = T_{\xi}[v_2] = \lambda v_2.
	\end{align*}
	Therefore, $v_1=v_2$ which is a contradiction to $v_1$ and $v_2$ being different eigenfunctions.
	
	\textit{Step 4: Conclusion}\\
	By step 2 and compactness of both $T_{\xi}$ and $\tilde{T}_{\xi}$ we know that $\sigma(T_{\xi})\setminus\{0\} = \sigma(\tilde{T}_{\xi})\setminus\{0\}$. Therefore $\tilde{r} = r(T_{\xi})$ and by steps 1 and 3 it follows that $r(T_{\xi})$ is a simple eigenvalue of $T_{\xi}$. The non-negativity of the eigenfunction follows from Lemma \ref{Lem-Tsrad}.
\end{proof}

\medskip
\begin{lemma}\label{Lem-realsp}
	The largest real eigenvalue $\lambda_d$ of $A$ is a simple eigenvalue and an isolated point of $\sigma(A)\cap\IR$. 
\end{lemma}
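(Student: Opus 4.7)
The plan is to pull all spectral structure back from the compact operator $T_{\lambda_d}$ via the resolvent equivalence $\xi\in\sigma(A)\Leftrightarrow 1\in\sigma(T_\xi)$ of Lemma~\ref{Lem-Spec}. The preparatory step is to show that on the real half-line $\{\xi>-(\underline{\beta}_m+\underline{\mu})\}$ the map $\xi\mapsto r(T_\xi)$ is continuous and strictly decreasing: continuity because $\xi\mapsto T_\xi$ is norm-continuous (the computation is essentially the one in the proof of Lemma~\ref{Lem-Tspace}), and strict monotonicity because, for $\xi_1<\xi_2$ and $v_1$ the positive dominant eigenfunction of $T_{\xi_1}$ provided by Lemmas~\ref{Lem-Tsrad} and~\ref{Lem-Uniqueness}, one has $T_{\xi_2}[v_1]\leq T_{\xi_1}[v_1]=r(T_{\xi_1})v_1$ with strict inequality on a set of positive measure. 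Combining this with $r(T_\xi)\to 0$ as $\xi\to\infty$ (from Lemma~\ref{Lem-Res}) and $1\in\sigma(T_{\lambda_d})$ (from Lemma~\ref{Lem-Spec}), the intermediate value theorem together with the maximality of $\lambda_d$ forces $r(T_{\lambda_d})=1$: otherwise $r(T_{\lambda_d})>1$ would yield some $\xi^{*}>\lambda_d$ with $r(T_{\xi^{*}})=1\in\sigma(T_{\xi^{*}})$ by Lemma~\ref{Lem-Tsrad}, producing a real eigenvalue of $A$ larger than $\lambda_d$.

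Once $r(T_{\lambda_d})=1$ is in hand, simplicity is essentially free. Lemma~\ref{Lem-Uniqueness} says $1$ is a simple eigenvalue of $T_{\lambda_d}$, and the bijection $U\mapsto v:=bU$ from the proof of Lemma~\ref{Lem-Spec} identifies $\ker(\lambda_d I-A)$ with $\ker(I-T_{\lambda_d})$, so the geometric eigenspace of $A$ at $\lambda_d$ is one-dimensional. The same bijection transports the rank-one Riesz spectral projection of $T_{\lambda_d}$ at $1$ into the spectral projection of $A$ at $\lambda_d$, giving algebraic simplicity.

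For isolation in $\sigma(A)\cap\IR$ I argue by contradiction: suppose $\xi_n\in\sigma(A)\cap\IR$ with $\xi_n\to\lambda_d$ and $\xi_n\neq\lambda_d$. Strict monotonicity of $r(T_\xi)$ rules out $\xi_n>\lambda_d$ (there $r(T_\xi)<1$, so $1\notin\sigma(T_\xi)$), hence $\xi_n\nearrow\lambda_d$. For each $n$, $1\in\sigma(T_{\xi_n})$ together with $r(T_{\xi_n})>1$; compactness of $T_{\xi_n}$ makes both $1$ and $r(T_{\xi_n})$ honest eigenvalues of $T_{\xi_n}$, distinct from one another, and both converge to $1=r(T_{\lambda_d})$ by continuity. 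This contradicts the stability of the isolated simple eigenvalue $1$ of $T_{\lambda_d}$ under the norm-continuous family $\xi\mapsto T_\xi$: Kato's stability theorem says that for $\xi$ sufficiently close to $\lambda_d$ the total algebraic multiplicity of $\sigma(T_\xi)$ in any fixed small disk around $1$ equals exactly one, so $T_{\xi_n}$ cannot carry two distinct eigenvalues arbitrarily close to $1$.

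The main obstacle is precisely this last perturbation argument. It requires verifying norm continuity (preferably operator-analyticity) of $\xi\mapsto T_\xi$ in a real neighborhood of $\lambda_d$, which follows from the smooth $\xi$-dependence of the exponential kernel together with the uniform bounds already used in the proof of Lemma~\ref{Lem-Tcomp}, and then invoking the stability theorem for isolated eigenvalues of finite algebraic multiplicity. A cleaner alternative, which would even yield isolation of $\lambda_d$ in the full complex spectrum of $A$, is to note that $\xi\mapsto T_\xi$ is operator-analytic with values in the compact operators on $\{\Re\xi>-(\underline{\beta}_m+\underline{\mu})\}$ and apply the analytic Fredholm alternative: then $\sigma(A)=\{\xi:1\in\sigma(T_\xi)\}$ is either the entire half-plane or a discrete subset of it, and Lemma~\ref{Lem-Res} excludes the former.
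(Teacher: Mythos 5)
Your proposal follows the same overall strategy as the paper: pass through the equivalence $\xi\in\sigma(A)\Leftrightarrow 1\in\sigma(T_\xi)$, exploit compactness and positivity of $T_\xi$, use continuity and strict monotonicity of $\xi\mapsto r(T_\xi)$ to pin down the unique $\hat\xi$ with $r(T_{\hat\xi})=1$, and read simplicity off Lemma~\ref{Lem-Uniqueness} via the bijection $v=bU$. Those parts match the paper closely, the only cosmetic difference being that the paper \emph{constructs} $\hat\xi$ by combining Theorem~\ref{ThmExistence} (which supplies a point of $\sigma(A)\cap\IR$, hence a $\xi^\ast$ with $r(T_{\xi^\ast})\geq 1$) with $r(T_\xi)<1$ for large $\xi$, whereas you \emph{force} $r(T_{\lambda_d})=1$ from the assumed maximality of $\lambda_d$; the two are equivalent.

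Where you genuinely diverge is the isolation step. The paper's argument is terse: it notes that $1=r(T_{\hat\xi})$ is isolated in $\sigma(T_{\hat\xi})$ and then asserts ``by continuity and monotonicity of $\xi\mapsto r(T_\xi)$'' that $1\notin\sigma(T_\xi)$ for $\xi$ slightly below $\hat\xi$. As literally stated, continuity and monotonicity of the \emph{spectral radius} alone do not exclude $1$ from $\sigma(T_\xi)$ when $r(T_\xi)>1$; one needs a statement about how the \emph{whole} spectrum near $1$ moves. You supply exactly the missing ingredient: norm continuity (indeed operator-analyticity) of $\xi\mapsto T_\xi$ plus Kato's stability of the rank-one Riesz projection at $1$, so that near $\lambda_d$ the total multiplicity inside a small disk around $1$ stays equal to one — incompatible with both $1$ and $r(T_{\xi_n})>1$ being eigenvalues. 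You correctly flag that the single nontrivial verification is norm continuity of $T_\xi$; this does go through by dominated convergence, because the $\xi$-dependence sits only in the factor $e^{-\xi\int_x^z dy/b(y)}$ and Assumption~\ref{Ass-ad} gives the needed integrable majorant on the half-plane $\Re\xi>-(\underline\beta_m+\underline\mu)$. Your alternative route via the analytic Fredholm alternative is also valid and would additionally give isolation of $\lambda_d$ in the full complex spectrum, which the paper handles separately (Lemma~\ref{Lem-CompSp}). Net effect: same skeleton, but your isolation argument is the rigorous version of what the paper states briefly, at the cost of having to verify the norm (or analytic) dependence of $T_\xi$ on $\xi$, which the paper's phrasing avoids by leaning on the reader.

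One small caveat worth recording: the paper's Lemma~\ref{Lem-Uniqueness} defines ``simple'' as uniqueness of the nonnegative eigenfunction (geometric simplicity). Your remark that the bijection also transports the rank-one Riesz projection, hence gives algebraic simplicity, is a stronger claim; it is plausible and follows from the identification $\ker(\lambda_d I-A)^k\leftrightarrow\ker(I-T_{\lambda_d})^k$ through $v=bU$, but if you keep it you should state that identification explicitly rather than only at the level of kernels.
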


\medskip
\begin{proof}
	For $\xi\in\IR$ the map $\xi\mapsto r(T_{\xi})$ is continuous as $T_{\xi}$ is compact for $\xi>-(\underline{\beta}_m+\underline{\mu})$ (see e.g. \cite[Theorem 2.1]{Degla2008}) and strictly decreasing as $T_{\xi}$ is strictly decreasing in $\xi$. 
	
	For $\xi > 2\overline{\beta}_m-\underline{\beta}_m-\underline{\mu}$, $r(T_{\xi})\leq \Norm{T_{\xi}}_{L^1_w}<1$ and by Theorem \ref{ThmExistence} there is some $\xi^{\ast}\in [-\underline{\mu},2\overline{\beta}_m-\underline{\beta}_m-\underline{\mu}]$ such that $\xi^{\ast}\in\sigma(A)$ thus $1\in\sigma(T_{\xi^{\ast}})$ and $r(T_{\xi^{\ast}})\geq 1$. Since $\xi\mapsto r(T_{\xi})$ is continuous and decreasing there is a unique $\hat{\xi}\in[-\underline{\mu},2\overline{\beta}_m-\underline{\beta}_m-\underline{\mu}]$ such that $r(T_{\hat{\xi}})=1$, i.e. $\hat{\xi}\in \sigma(A)$.
	
	For $\xi>\hat{\xi}$, $r(T_{\xi})<1$, i.e. $1\in\rho(T_{\xi})$ and hence $\xi\in\rho(A)$. As $r(T_{\hat{\xi}})=1$ is an eigenvalue $>0$ of a compact operator, it is an isolated point in the spectrum, i.e. there is $\varepsilon>0$ such that $[1-\varepsilon,1+\varepsilon] \cap \sigma(T_{\hat{\xi}}) = \{ 1 \}$. By continuity and monotonicity of the map $\xi\mapsto r(T_{\xi})$ there is therefore a $\varepsilon>0$ such that for $\xi\in[\hat{\xi}-\varepsilon,\hat{\xi})$, $1\notin \sigma(T_{\xi})$, i.e. $\xi\notin\sigma(A)$ and $\hat{\xi}$ is an isolated point in the real spectrum of $A$.
	
	It remains to be shown that $\hat{\xi}$ is a simple eigenvalue of $A$. Denote the unique (up to a constant) eigenfunction corresponding to $1=r(T_{\hat{\xi}})$ by $\hat{v}\in L^1_w$ ($\hat{v}$ is unique by Lemma \ref{Lem-Uniqueness}). Then, $\hat{v}=T_{\hat{\xi}}[\hat{v}]$ and therefore $\hat{v}$ is the unique solution of
	\begin{align*}
	\hat{v}'(z) + \frac{\hat{\xi} + \beta_m(z) + \mu(z)}{b(z)} \hat{v}(z) - \int\limits_z^{z_0} \frac{\beta_m(z')k(z,z')}{b(z')}\hat{v}(z')\: dz' =0.
	\end{align*}
	Define for $z\in(0,z_0)$, $\hat{u}(z):=\frac{\hat{v}(z)}{b(z)}$, then $\hat{u}\in L^1((0,z_0))$ and $(\hat{\xi}I-A)\hat{u}=0$, i.e. $\hat{u}$ is the unique eigenfunction for the simple eigenvalue $\hat{\xi}$ of $A$ and $\lambda_d:=\hat{\xi}$. 
\end{proof}

\medskip
\begin{lemma}\label{Lem-CompSp} 
	If $\lambda \in \sigma(A)$, $\lambda\neq \lambda_d$, then $\Re(\lambda)<\lambda_d$.
\end{lemma}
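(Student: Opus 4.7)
The plan is to transfer the question about $\sigma(A)$ into a question about the fixed points of the family $T_\xi$ via Lemma~\ref{Lem-Spec}, combine the pointwise complex-modulus domination $|T_\lambda v| \leq T_{\Re(\lambda)}|v|$ with the monotonicity of $\xi \mapsto r(T_\xi)$ from Lemma~\ref{Lem-realsp} to get $\Re(\lambda) \leq \lambda_d$, and then rule out equality via a phase-rigidity argument that exploits assumption \ref{Ass-k2} together with the non-vanishing of $b$ on $(0,z_0)$.

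First I would reduce to the main case $\Re(\lambda) > -(\underline{\beta}_m + \underline{\mu})$, where Lemma~\ref{Lem-Tcomp} makes $T_\lambda$ compact; anything below this strip lies strictly below $\lambda_d$ via the lower bound $\lambda_d \geq -\overline{\mu}$ of Theorem~\ref{ThmExistence} (with the remaining boundary subcase handled by unboundedness of $T_\xi$ for $\Re(\xi) \leq -(\underline{\beta}_m + \underline{\mu})$). By Lemma~\ref{Lem-Spec} and compactness, there is $v \in L^1_w \setminus \{0\}$ with $T_\lambda v = v$. Because $|e^{-\int_x^z \alpha(y)\,dy}| = e^{-\int_x^z \Re(\alpha(y))\,dy}$, the triangle inequality yields the pointwise bound $|T_\lambda v(z)| \leq T_{\Re(\lambda)}|v|(z)$, hence $|v| \leq T_{\Re(\lambda)}|v|$. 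Iterating this and using that $\Norm{\cdot}_{L^1_w}$ is monotone on non-negative functions gives $\Norm{T_{\Re(\lambda)}^n} \geq 1$ for all $n$, so $r(T_{\Re(\lambda)}) \geq 1$. Combined with the continuity and strict monotonicity of $\xi \mapsto r(T_\xi)$ on $\IR$ and $r(T_{\lambda_d}) = 1$ from Lemma~\ref{Lem-realsp}, this forces $\Re(\lambda) \leq \lambda_d$.

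The substantive part is to exclude equality when $\lambda \neq \lambda_d$. Suppose for contradiction $\Re(\lambda) = \lambda_d$ and $\lambda = \lambda_d + i\omega$ with $\omega \neq 0$. Restrict to $[m,z_0]$ and consider the operator $\tilde T_{\lambda_d}$ from the proof of Lemma~\ref{Lem-Uniqueness}, which is compact and strictly positive with simple Perron eigenvalue $1$. Using that strict positivity, the dual operator $\tilde T_{\lambda_d}^*$ admits a dual Perron eigenfunction $\hat u$ that is strictly positive almost everywhere on $[m,z_0]$. Pairing the inequality $\tilde T_{\lambda_d}\tilde{|v|} - \tilde{|v|} \geq 0$ (with $\tilde{|v|} := |v|\mid_{[m,z_0]}$) against $\hat u$ forces $\langle \hat u, \tilde T_{\lambda_d}\tilde{|v|} - \tilde{|v|}\rangle = 0$, hence $\tilde{|v|} = \tilde T_{\lambda_d}\tilde{|v|}$ a.e. on $[m,z_0]$. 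Simplicity of the Perron eigenvalue then makes $\tilde{|v|}$ a positive multiple of the Krein--Rutman eigenfunction, so $|v| > 0$ a.e. on $[m,z_0]$.

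The main obstacle is the final phase analysis, since $T_\xi$ itself is not strictly positive on $L^1_w$ (anything supported in $[0,m]$ is annihilated) and the standard complex Perron--Frobenius argument does not apply in its usual form. The equality $|T_\lambda v(z)| = T_{\lambda_d}|v|(z)$ for $z \in [m,z_0]$ saturates two successive triangle inequalities. The inner one, combined with \ref{Ass-k2} and $|v| > 0$ a.e. on $[m,z_0]$, forces $v(z')/|v(z')|$ to be a single constant phase $e^{i\theta}$ a.e. on $[m,z_0]$. Substituting back, the outer equality becomes
\begin{align*}
\left|\int_0^z \psi(x)\, e^{-\int_x^z \alpha_{\lambda_d}(y)\,dy}\, e^{-i\omega\int_x^z dy/b(y)}\,dx\right| = \int_0^z \psi(x)\, e^{-\int_x^z \alpha_{\lambda_d}(y)\,dy}\,dx,
\end{align*}
where $\psi(x) := \int_{\max(x,m)}^{z_0} \beta_m(z')k(x,z')|v(z')|/b(z')\,dz' > 0$ for $x \in [0,z_0)$. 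Because $b > 0$ on $(0,z_0)$ by \ref{Assb}, the map $x \mapsto \int_x^z dy/b(y)$ is strictly monotone in $x$, so the phase factor $e^{-i\omega\int_x^z dy/b(y)}$ cannot be a.e. constant on $(0,z)$ unless $\omega = 0$. This contradicts $\omega \neq 0$, completing the proof. The delicacy lies precisely in locating a setting in which the complex modulus equality is rigid enough to pin down the phase: restricting to $[m,z_0]$ supplies strict positivity for the Perron step, while the flow-type exponential $e^{-\int_x^z dy/b(y)}$ supplies the strict monotonicity needed to conclude $\omega = 0$.
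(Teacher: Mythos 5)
Your argument follows the paper's proof in its essential outline: translate $\sigma(A)$ to fixed points of $T_\xi$ via Lemma~\ref{Lem-Spec}, use the modulus domination $|T_\lambda v|\le T_{\Re(\lambda)}|v|$ together with monotonicity of $\xi\mapsto r(T_\xi)$ to obtain $\Re(\lambda)\le\lambda_d$, and then rule out peripheral spectrum through a phase-rigidity argument driven by the strict monotonicity of $x\mapsto\int_x^z dy/b(y)$. Two places are handled more transparently in your version than in the paper: restricting to $[m,z_0]$ gives a genuinely strictly positive compact operator before the dual Perron eigenfunction is invoked (the paper applies the duality pairing on all of $(0,z_0)$, where $T_{\lambda_d}^*$ is positive but not strictly so, leaving the nondegeneracy $\langle f^*,|f|\rangle>0$ less immediate), and you peel the double integral into inner and outer triangle-inequality saturations rather than invoking Rudin's Theorem~1.39 once for the double integral. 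The one weak point is the preliminary reduction to the strip $\Re(\xi)>-(\underline{\beta}_m+\underline{\mu})$: the bound $\lambda_d\ge-\overline{\mu}$ you quote from Theorem~\ref{ThmExistence} gives $-(\underline{\beta}_m+\underline{\mu})<\lambda_d$ only when $\underline{\beta}_m>\overline{\mu}-\underline{\mu}$, which is not a standing assumption. What you actually need is the sharper bound $\lambda_d\ge-\underline{\mu}$, as asserted in the proof of Lemma~\ref{Lem-realsp} (then $\underline{\beta}_m>0$ settles it); the paper also does not carefully justify why it suffices to work in this strip, so this caveat is inherited rather than introduced, but as written your reduction does not close. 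Aside from that, the modulus-domination and phase-rigidity steps are sound and match the paper's argument.
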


\medskip
\begin{proof}
	The proof follows the proof of Theorem 6.13 in \cite{Heijmans1986}. 
	
	Let $\lambda\in \sigma(A)$ with $\Re(\lambda)>-(\underline{\beta}_m + \underline{\mu})$ and denote with $\lambda_d$ the largest real eigenvalue of $A$.
	As $\lambda\in\sigma(A)$, $1 \in \sigma(T_{\lambda})$ and therefore, by compactness of $T_{\lambda}$, 1 is an eigenvalue. Denote the corresponding eigenfunction by $f\in L^1_w\setminus\{0\}$. Then,
	\begin{align*}
	\abs{f} = \abs{T_{\lambda}[f]}&=\abs{\int\limits_0^z \int\limits_x^{z_0} \frac{\beta_m(z')k(x,z')}{b(z')} f(z') \: dz' \: e^{-\int_{x}^{z} \alpha(y) \: dy}\: dx}\\
	&\leq \int\limits_0^z \int\limits_x^{z_0} \frac{\beta_m(z')k(x,z')}{b(z')} \abs{f}(z') \: dz' \: e^{-\int_{x}^{z} \Re(\alpha(y)) \: dy}\: dx \\
	&= T_{\Re(\lambda)}[\abs{f}].
	\end{align*}
	As $r(T_{\Re(\lambda)})\neq 0$ is an eigenvalue of $T_{\Re(\lambda)}$ and $T_{\Re(\lambda)}$ is compact, $r(T_{\Re(\lambda)})$ is also a eigenvalue of the dual operator $T^{\ast}_{\Re(\lambda)}$ (see e.g. \cite[Theorem 2]{Yosida1995}).
	We denote the eigenfunction of $T^{\ast}_{\Re(\lambda)}$ corresponding to the eigenvalue $r(T_{\Re(\lambda)})$ by $f^{\ast}\in L^{\infty}_w := \left\lbrace f\in L^{\infty}((0,z_0)):\: \frac{f(z)}{b(z)}\in L^{\infty}((0,z_0)) \right\rbrace$. 
	Let $f^{\ast}$ be normalized s.t. $\left\langle f^{\ast},f\right\rangle=1$.
	Taking duality pairings in $\abs{f} \leq T_{\Re(\xi)}[\abs{f}]$ yields
	\begin{align*}
	r(T_{\Re(\lambda)}) \left\langle f^{\ast},\abs{f} \right\rangle \geq \left\langle f^{\ast},\abs{f} \right\rangle,
	\end{align*}
	therefore $r(T_{\Re(\lambda)}) \geq 1$. As $\lambda \mapsto r(T_{\lambda})$ is decreasing for $\lambda \in \IR$ and $r(T_{\lambda_d})=1$ (see the proof of Lemma \ref{Lem-realsp}) this implies that $\Re(\lambda)\leq \lambda_d$.
	
	Suppose that $\lambda = \lambda_d + i\eta$, then as $\Re(\lambda)=\lambda_d$ we obtain $T_{\lambda_d}[\abs{f}] \geq \abs{f}$.
	Assume that $T_{\lambda_d}[\abs{f}] > \abs{f}$, then taking duality pairings again, $\left\langle f,f^{\ast} \right\rangle > \left\langle f,f^{\ast} \right\rangle$ which is a contradiction. Therefore, $T_{\lambda_d}[\abs{f}] = \abs{f}$.
	
	Let $f_d$ be the non-negative eigenfunction of $T_{\lambda_d}$ for the simple eigenvalue 1 (by Lemma \ref{Lem-realsp}). Therefore, $\abs{f}=c f_d$ for some constant $c\in\IR$, 		i.e. $f(z) = c f_d(z) e^{i g(z)}$ for some real valued function $g(z)$. W.l.o.g. $c=1$. 
	Substituting this in $T_{\lambda_d}[f_d]=\abs{T_{\lambda}[f]}$ yields
	\begin{align*}
	&\int\limits_0^z \int\limits_x^{z_0} \frac{\beta_m(z') k(x,z')}{b(z')} f_d(z') \: dz' e^{-\int_x^z \frac{\lambda_d + \beta_m(y) + \mu(y)}{b(y)}\: dy}\: dx =\\
	&\quad = \abs{\int\limits_0^z \int\limits_x^{z_0} \frac{\beta_m(z') k(x,z')}{b(z')} f_d(z') e^{ig(z')} \: dz' e^{-\int_x^z \frac{\lambda_d + i\eta + \beta_m(y) + \mu(y)}{b(y)}\: dy}\: dx}.
	\end{align*}
	By \cite[Theorem 1.39]{Rudin1886}, there exists $a\in \IC$ with $\abs{a}=1$ such that
	\begin{align*}
	f_d(z) = \abs{f_d(z)e^{i\left(g(z')-\eta \int_x^z\rec{b(y)}\:dy \right)}} = a f_d(z)e^{i\left(g(z')-\eta \int_x^z\rec{b(y)}\:dy \right)}.
	\end{align*}
	Let $\varphi\in \IR$ such that $a=e^{i\varphi}$, then
	\begin{align}
	g(z') - \eta \int_x^z\rec{b(y)}\:dy + \varphi = 2\pi\cdot k, \quad \text{ for }k\in\IZ.\label{Eq-g-eta-phi}
	\end{align}
	Therefore, the left-hand side is independent of $x$ and $z$ and $\eta =0$. Thus, we have now shown that if $\lambda=\lambda_d + i\eta \in \sigma(A)$ then $\eta=0$, i.e. $\lambda=\lambda_d$. Overall, if $\lambda\in\sigma(A)$ and $\lambda\neq \lambda_d$ then $\Re(\lambda)<\lambda_d$.
\end{proof}
	
\section{\texorpdfstring{Construction of $\mathcal{U}$}{Construction of U}}
\label{S-Numerics}

For the construction of $\mathcal{U}$ we consider the case that $\beta$ and $\mu$ are constant with $\beta>0$ and $\mu\geq 0$, $b(z)=\frac{b_0}{z_0}z(z_0-z)$ for some $b_0>0$, and $k$ is scalable, i.e. $k(z,z')=\frac{2}{z'} \Phi\left(\frac{z}{z'}\right) \chi_{\Omega}(z,z')$ (cf. Example \ref{Ex-Ass}(a)).

For constant $\beta$ and $\mu$ it is known that $\lambda=\beta-\mu$. Furthermore, for $\Phi\equiv 1$ we can compute the solution $\mathcal{U}$ for \eqref{EigProb} explicitly. Note that if $\Phi\equiv 1$ then $k$ does not satisfy Assumption \ref{Ass-ad}. Nevertheless, we consider this case as we are then able to compare the numerical simulation for $\mathcal{U}$ with the exact solution and thereby assess the simulation.

\medskip
\begin{example}\label{Ex-sol}
For $\beta>0$ and $\mu\geq 0$ constant, $\lambda=\beta-\mu$, $b(z)=\frac{b_0}{z_0}z(z_0-z)$, and $k$ scalable with $\Phi\equiv 1$ the function $\mathcal{U}(z) = Cz^{-\alpha}(z_0-z)^{\alpha}$ with $C>0$ and $\alpha := \frac{\lambda + \beta + \mu}{b_0}=\frac{2\beta}{b_0}$ is a solution for the eigenproblem \eqref{EigProb} on $[m,z_0]$.
\end{example}

\medskip
We use again the transformation $v(z) = b(z)\mathcal{U}(z)$ (c.f. Section \ref{S-Spectrum}), then $\mathcal{U}$ is a solution for the eigenproblem \eqref{EigProb} on $[m,z_0]$ iff $v$ is a solution for
\begin{equation}\label{Eq-v2}
	\begin{aligned}
		&v'(z) + \alpha z_0 \frac{v(z)}{z(z_0-z)} = \alpha z_0 \int\limits_z^{z_0} \frac{\Phi\left(\frac{z}{z'}\right)\:v(z')}{(z')^2(z_0-z')}\: dz',
	\end{aligned}
\end{equation}
where $\alpha =\frac{2\beta}{b_0}$. By Example \ref{Ex-sol}, for $\Phi\equiv 1$ the function $v(z)= C z^{1-\alpha}(z_0-z)^{\alpha}$ is a solution of \eqref{Eq-v2}. 
Therefore, we use the ansatz $v(z)= (z_0 -z)^{\alpha} g(z)$ with $g(z_0)=1$.

\medskip
\begin{lemma}\label{Lem-gv}
	If there is a solution $g$ with $g\in\cont{0}([m,z_0])\cap\cont{1}((m,z_0))$ for 
	\begin{equation}\label{Eq-g}
		\begin{aligned}
			&g'(z) + \frac{\alpha}{z} g(z) = \frac{\alpha_0 z_0}{(z_0-z)^{\alpha}} \int\limits_{z}^{z_0} \frac{\Phi\left( \frac{z}{z'}\right)}{(z')^2} (z_0-z')^{\alpha -1}g(z')\: dz', \quad g(z_0)=1 
		\end{aligned}
	\end{equation}
	then $v$ with $v(z):= C (z_0-z)^{\alpha} g(z)$ for some $C>0$ is a solution for \eqref{Eq-v2} and $v\in\cont{1}((m,z_0))$.
\end{lemma}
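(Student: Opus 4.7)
The proof is essentially a direct verification by substitution, so my plan is to check that the ansatz $v(z) = C(z_0-z)^\alpha g(z)$ transforms equation \eqref{Eq-g} into equation \eqref{Eq-v2}, and then to address regularity.

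First, I would compute $v'(z)$ by the product rule:
\begin{equation*}
v'(z) = -\alpha C(z_0-z)^{\alpha-1} g(z) + C(z_0-z)^{\alpha} g'(z).
\end{equation*}
Substituting this into the left-hand side of \eqref{Eq-v2} and factoring out $C(z_0-z)^{\alpha}$ gives
\begin{equation*}
v'(z) + \frac{\alpha z_0}{z(z_0-z)} v(z) = C(z_0-z)^{\alpha}\left[ g'(z) + \left( \frac{\alpha z_0}{z(z_0-z)} - \frac{\alpha}{z_0-z}\right) g(z)\right].
\end{equation*}
The key algebraic simplification is
\begin{equation*}
\frac{\alpha z_0}{z(z_0-z)} - \frac{\alpha}{z_0-z} = \frac{\alpha}{z_0-z}\cdot\frac{z_0-z}{z} = \frac{\alpha}{z},
\end{equation*}
which reduces the bracket to precisely the left-hand side of \eqref{Eq-g}.

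Next I would invoke the hypothesis that $g$ solves \eqref{Eq-g} to replace $g'(z)+\frac{\alpha}{z}g(z)$ by its integral expression, obtaining
\begin{equation*}
v'(z) + \frac{\alpha z_0}{z(z_0-z)} v(z) = C(z_0-z)^{\alpha}\cdot\frac{\alpha z_0}{(z_0-z)^{\alpha}}\int_z^{z_0}\frac{\Phi(z/z')}{(z')^2}(z_0-z')^{\alpha-1}g(z')\,dz'.
\end{equation*}
The factors $(z_0-z)^{\alpha}$ cancel, and rewriting $(z_0-z')^{\alpha-1} g(z') = \frac{(z_0-z')^\alpha g(z')}{z_0-z'} = \frac{v(z')/C}{z_0-z'}$ gives exactly the right-hand side of \eqref{Eq-v2}. (Here I read the constant $\alpha_0$ in \eqref{Eq-g} as $\alpha$; this is the only way the two equations match, and the identity above pins it down.)

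Finally, for regularity, note that $z \mapsto (z_0-z)^{\alpha}$ is $C^\infty$ on $(m,z_0)$ since $z_0 - z > 0$ there, and $g \in \cont{1}((m,z_0))$ by hypothesis, so the product $v(z) = C(z_0-z)^{\alpha} g(z)$ belongs to $\cont{1}((m,z_0))$. No real obstacle is expected: the entire argument is a bookkeeping computation, and the only mildly non-obvious point is the cancellation identity $\frac{\alpha z_0}{z(z_0-z)} - \frac{\alpha}{z_0-z} = \frac{\alpha}{z}$ that motivated the ansatz in the first place.
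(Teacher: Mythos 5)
Your proof is correct and follows essentially the same route as the paper: compute $v'$ by the product rule, substitute the integro-differential equation for $g'$, and observe that the $(z_0-z)^\alpha$ factors and the partial-fractions identity $\frac{\alpha z_0}{z(z_0-z)} = \frac{\alpha}{z} + \frac{\alpha}{z_0-z}$ reproduce \eqref{Eq-v2}. You also correctly identify the notational inconsistency in the paper (the $\alpha_0$ in \eqref{Eq-g} must be read as $\alpha$) and add a brief regularity justification, which the paper leaves implicit.
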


\medskip
\begin{remark}
	In Lemma \ref{Lem-gv} we do not have equivalence as there can only be a function $g$ with $v(z)=C(z_0-z)^{\alpha}g(z)$ and $g(z_0)=1$ if $v(z)$ behaves like $(z_0-z)^{\alpha}$ in a neighborhood of $z_0$. 
	Otherwise, either $g(z_0)=0$ or $\lim\limits_{z\rightarrow z_0^-} g(z) =\infty$.
	By Example \ref{Ex-sol} we know that at least for $\Phi\equiv 1$, $v(z)\sim (z_0-z)^{\alpha}$ near $z_0$.
\end{remark}

\medskip
\begin{proof}
	Define $v(z):= (z_0-z)^{\alpha} g(z)$. As $g$ is a solution for \eqref{Eq-g},
	\begin{align*}
	v'(z) &= g'(z) (z_0-z)^{\alpha} + g(z) \alpha (z_0-z)^{\alpha-1} (-1)\\
	&= (z_0-z)^{\alpha} \left( -\frac{\alpha}{z} g(z) + \frac{\alpha z_0}{(z_0-z)^{\alpha}} \int\limits_{z}^{z_0}\Phi\left( \frac{z}{z'}\right) g(z') (z')^{-2}(z_0-z')^{\alpha -1}\: dz' \right)\\
	&\phantom{{}={}} -\frac{\alpha}{z_0-z} v(z)\\
	&= -\frac{\alpha z_0}{z(z_0-z)} v(z) + \alpha z_0 \int\limits_{z}^{z_0} \Phi\left(\frac{z}{z'}\right) \frac{v(z')}{(z')^2 (z_0-z')}\: dz'.
	\end{align*}
\end{proof}

\medskip
\begin{lemma}
	There exists a unique solution $g \in \cont{0}([m,z_0])\cap\cont{1}((m,z_0))$ for \eqref{Eq-g}. \label{Lem-g}
\end{lemma}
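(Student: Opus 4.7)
The plan is to rewrite \eqref{Eq-g} as a Volterra-type integral equation and apply Banach's fixed-point theorem. Multiplying \eqref{Eq-g} by the integrating factor $z^{\alpha}$ collapses the left-hand side to $\frac{d}{dz}(z^{\alpha}g(z))$. Integrating from $z$ to $z_0$ and using the terminal condition $g(z_0)=1$ gives the equivalent integral equation
\begin{align*}
g(z) = \left(\frac{z_0}{z}\right)^{\!\alpha} - \frac{\alpha z_0}{z^{\alpha}}\int_z^{z_0}\frac{s^{\alpha}}{(z_0-s)^{\alpha}}\int_{s}^{z_0}\frac{\Phi(s/z')(z_0-z')^{\alpha-1}}{(z')^2}\,g(z')\,dz'\,ds =: F[g](z).
\end{align*}
A fixed point of $F$ in $\cont{0}([m,z_0])$ automatically satisfies $g(z_0)=1$, and differentiating the integral formula recovers \eqref{Eq-g} on $(m,z_0)$ together with $g\in\cont{1}((m,z_0))$.

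The first step is to verify that $F:\cont{0}([m,z_0])\to\cont{0}([m,z_0])$ is well-defined. The apparent singularity $(z_0-s)^{-\alpha}$ at the upper endpoint is tamed by the inner integral through the bound
\begin{align*}
\int_s^{z_0}\frac{(z_0-z')^{\alpha-1}}{(z')^2}\,dz' \le \frac{(z_0-s)^{\alpha}}{\alpha\,m^2},
\end{align*}
so after cancellation the integrand of the outer integral is bounded on $[z,z_0]$; continuity of $F[g]$, including the identity $F[g](z_0)=1$, then follows by dominated convergence.

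For the contraction I would exploit the same cancellation: using $|g_1(z')-g_2(z')|\le\sup_{y\in[s,z_0]}|g_1(y)-g_2(y)|$ together with the estimate above yields the Volterra-type Lipschitz bound
\begin{align*}
|F[g_1](z)-F[g_2](z)| \le K\int_z^{z_0}\sup_{y\in[s,z_0]}|g_1(y)-g_2(y)|\,ds,
\end{align*}
for a constant $K=K(\alpha,z_0,m,\Norm{\Phi}_\infty)$. Iterating this bound Picard-style gives
\begin{align*}
\sup_{y\in[z,z_0]}|F^n[g_1](y)-F^n[g_2](y)| \le \frac{K^n(z_0-z)^n}{n!}\,\Norm{g_1-g_2}_\infty,
\end{align*}
so $F^n$ is a strict contraction on $\cont{0}([m,z_0])$ for $n$ large enough. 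Banach's fixed-point theorem applied to $F^n$ then produces a unique fixed point $g\in\cont{0}([m,z_0])$ of $F$, which is also the unique fixed point of $F$ itself.

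The main obstacle I anticipate is making the cancellation between the outer singularity $(z_0-s)^{-\alpha}$ and the inner factor $(z_0-s)^{\alpha}$ quantitative and uniform in $z\in[m,z_0]$, so that the Lipschitz constant $K$ stays finite and independent of $z$; once that estimate is in place, the Picard iteration and the recovery of $\cont{1}$ regularity by differentiating the integral formula are routine.
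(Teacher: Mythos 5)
Your approach is correct but follows a genuinely different route from the paper. Both start from the same integral equation obtained by variation of parameters with the terminal condition $g(z_0)=1$. The paper then works locally: it first produces the solution on a small interval $[z_0-\varepsilon,z_0]$ by showing the fixed-point map is a $\cont{0}$-contraction there, and then extends backwards step by step via $\cont{1}$-contractions on intervals $[a-\delta,a]$, with $\delta$ chosen small enough at each stage. You instead exploit the Volterra structure globally: the bound $\int_s^{z_0}(z_0-z')^{\alpha-1}(z')^{-2}\,dz'\leq (z_0-s)^\alpha/(\alpha m^2)$ cancels the outer factor $(z_0-s)^{-\alpha}$ and, since $z,s,z'\geq m>0$, yields a uniform kernel constant $K$, from which the classical $n!$-bound $\Norm{F^n[g_1]-F^n[g_2]}_\infty\leq\frac{K^n(z_0-m)^n}{n!}\Norm{g_1-g_2}_\infty$ follows, so some power $F^n$ is a strict contraction on all of $\cont{0}([m,z_0])$ at once. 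This is cleaner and more uniform than the paper's argument: in particular your estimate never produces the factor $z^{\alpha-2}$, so there is no special case at $\alpha=1$ (the paper has to treat $\alpha=1$ ``analogously''). Recovering $\cont{1}$ regularity by differentiating the fixed-point identity is indeed routine once one notes that the outer integrand extends continuously because $H(s):=\int_s^{z_0}\Phi(s/z')(z_0-z')^{\alpha-1}(z')^{-2}g(z')\,dz'$ is $O\big((z_0-s)^\alpha\big)$. The one thing the paper's more laborious construction buys is that it is literally the algorithm used for the numerical simulation in Section~\ref{S-Numerics}: extend backwards over intervals $[a-\delta,a]$ with $\delta$ chosen so the local map contracts. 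Your argument would not naturally suggest that scheme, but as a proof of Lemma~\ref{Lem-g} it is complete and arguably simpler.
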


\medskip
\begin{proof}
	Variation of parameters yields the equation
	\begin{align*}
	g(x) &= \frac{\alpha z_0}{x^{\alpha}} \int\limits_{z_0}^{x} \left(\frac{z}{z_0-z}\right)^{\alpha} \int\limits_{z}^{z_0} \Phi\left( \frac{z}{z'} \right) (z')^{-2} (z_0-z')^{\alpha - 1} g(z') \: dz'dz + \left(\frac{z_0}{x}\right)^{\alpha},
	\end{align*}
	for $x\in (0,z_0]$. Let $I_1 := (z_0/x)^{\alpha}$.
	
	\noindent Let $a\in[m,z_0)$. Consider the operator $G: \cont{0}([a,z_0]) \rightarrow \cont{0}([a,z_0])$ defined by
	\begin{align*}
	G[g](x) := \frac{\alpha z_0}{x^{\alpha}} \int\limits_{z_0}^{x} \left(\frac{z}{z_0-z}\right)^{\alpha} \int\limits_{z}^{z_0} \Phi\left( \frac{z}{z'} \right) (z')^{-2} (z_0-z')^{\alpha - 1} g(z') \: dz'dz.
	\end{align*}
	Note that $G[g](x)$ for $x\in[a,z_0]$ only depends on $\left.g\right\vert_{[a,z_0]}$. If $\alpha\neq 1$, we estimate the supremum norm of $G[g]$ for some $g\in\cont{0}([a,z_0])$ using $\rec{(z')^2}\leq \rec{z^2}$ for $z\leq z'$,
	\begin{align*}
		\Norm{G[g]}_{\infty} &\leq \alpha z_0 \Norm{\Phi}_{\infty} \Norm{g}_{\infty} \\
		&\phantom{{}\leq{}}\sup\limits_{x\in[a,z_0]} x^{-\alpha} \abs{\int\limits_{z_0}^{x}\left( \frac{z}{z_0-z} \right)^{\alpha} \frac{1}{z^2} \int\limits_{z}^{z_0} (z_0-z')^{\alpha-1}\:dz' dz }\\
		&= z_0 \Norm{\Phi}_{\infty} \sup\limits_{x\in[a,z_0]} x^{-\alpha} \abs{\int\limits_{z_0}^{x}z^{\alpha-2}\:dz } \Norm{g}_{\infty}\\
		&= \frac{z_0}{\abs{\alpha-1}} \Norm{\Phi}_{\infty} \sup\limits_{x\in[a,z_0]} x^{-\alpha} \abs{ x^{\alpha-1}-z_0^{\alpha-1} } \Norm{g}_{\infty}\\
		&= \frac{1}{\abs{\alpha-1}} \Norm{\Phi}_{\infty} \abs{ \frac{z_0}{a} - \left( \frac{z_0}{a} \right)^{\alpha} } \Norm{g}_{\infty}.
	\end{align*}
	Therefore, $G: \cont{0}([a,z_0]) \rightarrow \cont{0}([a,z_0])$ is a bounded operator.
	It holds that
	\begin{align*}
	\lim\limits_{a\rightarrow z_0} \abs{ \frac{z_0}{a} - \left( \frac{z_0}{a} \right)^{\alpha} } = 0.
	\end{align*}
	Therefore, there exists some $\varepsilon\in (0,z_0)$ such that $G$ is a contraction on $[z_0-\varepsilon,z_0]$ and $I_1$ is bounded on $[z_0-\varepsilon,z_0]$. The case $\alpha =1$ is analogous. 
	Hence, by the Banach Fixed Point Theorem, there exists an $\varepsilon\in(0,z_0)$ such that there is a unique continuous solution $g: [z_0-\varepsilon,z_0]\rightarrow \mathbb{R}$ of \eqref{Eq-g}.
	
	We construct a continuously differentiable solution iteratively: assume we are given a unique continuous solution $g:[a,z_0] \rightarrow \IR$ on $[a,z_0]$ for some $a \in (0,z_0)$. By variation of parameters, for $x\in (0,a]$, 
	\begin{align*}
	g(x) &= \frac{\alpha z_0}{x^{\alpha}} \int\limits_{a}^{x} \left(\frac{z}{z_0-z}\right)^{\alpha} \int\limits_{z}^{a} \Phi\left( \frac{z}{z'} \right) (z')^{-2} (z_0-z')^{\alpha - 1} g(z') \: dz'dz\\
	&\phantom{=} +\frac{\alpha z_0}{x^{\alpha}} \int\limits_{a}^{x} \left(\frac{z}{z_0-z}\right)^{\alpha} \int\limits_{a}^{z_0} \Phi\left( \frac{z}{z'} \right) (z')^{-2} (z_0-z')^{\alpha - 1} g(z') \: dz'dz + \left( \frac{a}{x}\right)^{\alpha} g(a).
	\end{align*}
	Denote the second summand by $I_2(x)$ and the third summand by $I_3(x)$.
	We consider for some $\delta \in (0,a)$ the operator $G_a: \cont{1}([a-\delta, a]) \rightarrow \cont{1}([a-\delta, a])$ given by
	\begin{align*}
	G_a[g](x) := \frac{\alpha z_0}{x^{\alpha}} \int\limits_{a}^{x} \left(\frac{z}{z_0-z}\right)^{\alpha} \int\limits_{z}^{a} \Phi\left( \frac{z}{z'} \right) (z')^{-2} (z_0-z')^{\alpha - 1} g(z') \: dz'dz.
	\end{align*}
	Analogously to before, for $x\in [a-\delta,a]$ and $\alpha\neq 1$ we estimate the supremum norm
	\begin{align*}
	\Norm{G_a[g]}_{\infty} &\leq \alpha z_0 \Norm{\Phi}_{\infty} \Norm{\left. g\right\vert_{[a-\delta,a]}}_{\infty} \\
	&\mathrel{\phantom{\leq}} \cdot \sup\limits_{x\in[a-\delta,a]} x^{-\alpha} \abs{\int\limits_{a}^{x}\left( \frac{z}{z_0-z} \right)^{\alpha} \frac{1}{z^2} \int\limits_{z}^{a} (z_0-z')^{\alpha-1}\:dz' dz}\\
	&\leq z_0 \Norm{\Phi}_{\infty} \sup\limits_{x\in[a-\delta,a]} x^{-\alpha} \abs{\int\limits_{x}^{a} z^{\alpha-2} \: dz} \Norm{\left. g\right\vert_{[a-\delta,a]}}_{\infty}\\
	&\leq \frac{z_0}{\abs{\alpha-1} a} \Norm{\Phi}_{\infty} \sup\limits_{x\in[a-\delta,a]} \abs{ \left(\frac{a}{x}\right)^{\alpha}-\frac{a}{x} } \Norm{\left. g\right\vert_{[a-\delta,a]}}_{\infty}\\
	&\leq \frac{z_0}{\abs{\alpha-1} a} \Norm{\Phi}_{\infty} \sup\limits_{x\in[a-\delta,a]} \abs{ \left(\frac{a}{x}\right)^{\alpha}-\frac{a}{x} } \Norm{\left. g\right\vert_{[a-\delta,a]}}_{\cont{1}([a-\delta,a])}.
	\end{align*}
	Furthermore,
	\begin{align*}
	\frac{d}{dx} G_a[g] (x) &=\left(-\frac{\alpha}{x}\right) G_a[g](x) + \frac{\alpha z_0}{x^{\alpha}} \left( \frac{x}{z_0-x} \right)^{\alpha} \int\limits_{x}^{a} \frac{\Phi\left(\frac{x}{z'}\right)}{(z')^2} (z_0-z')^{\alpha-1} g(z')\: dz'\\
	&=\left(-\frac{\alpha}{x}\right) G_a[g](x) + \frac{\alpha z_0}{(z_0-x)^{\alpha}} \int\limits_{x}^{a} \Phi\left(\frac{x}{z'}\right) (z')^{-2} (z_0-z')^{\alpha-1} g(z')\: dz',
	\end{align*}
	and for $x\in[a-\delta,a]$,
	\begin{align*}
	\Norm{\frac{d}{dx} G_a[g]}_{\infty} & \leq \frac{\alpha}{a-\delta} \Norm{G_a[g]}_{\infty} + \alpha z_0 \Norm{\Phi}_{\infty} \Norm{\left. g\right\vert_{[a-\delta,a]}}_{\infty}\\
	&\mathrel{\phantom{\leq}} \max\limits_{x\in[a-\delta,a]} \frac{1}{(z_0-x)^{\alpha}} \int_{x}^{a} (z')^{-2} (z_0-z')^{\alpha-1} \: dz'.
	\end{align*}
	Because of
	\begin{align*}
	&\max\limits_{x\in[a-\delta,a]} \frac{1}{(z_0-x)^{\alpha}} \int_{x}^{a} (z')^{-2} (z_0-z')^{\alpha-1} \: dz' \leq\\
	&\qquad\leq \frac{1}{(a-\delta)^2} \frac{1}{(z_0-a)^{\alpha}} \max\limits_{x\in[a-\delta,a]} \int\limits_{x}^{a} (z_0-z')^{\alpha-1}\:dz\\
	&\qquad = \frac{1}{(a-\delta)^2} \frac{1}{(z_0-a)^{\alpha}} \frac{1}{\alpha} \max\limits_{x\in[a-\delta,a]} \left( (z_0-x)^{\alpha}-(z_0-a)^{\alpha} \right)\\
	&\qquad = \frac{1}{(a-\delta)^2} \frac{1}{(z_0-a)^{\alpha}} \frac{1}{\alpha} \left( (z_0-(a-\delta))^{\alpha}-(z_0-a)^{\alpha} \right)\\
	&\qquad \rightarrow 0 \quad \text{for } \delta\rightarrow 0,
	\end{align*}
	and $a\in(0,z_0)$ it holds that
	\begin{align*}
	\Norm{G_a[g]}_{\cont{1}([a-\delta,a])}=\max\left\lbrace \Norm{G_a[g]}_{\infty},\:\Norm{\frac{d}{dx}G_a[g]}_{\infty} \right\rbrace < \Norm{g}_{\cont{1}([a-\delta,a])}
	\end{align*}
	for $\delta\in (0,a)$ sufficiently small, i.e. $G_a$ is a contraction on $\cont{1}([a-\delta,a])$. The inhomogeneities $I_2$ and $I_3$ are bounded on $[a-\delta,a]$, as $a-\delta >0$ and 
	\begin{align*}
	\abs{I_2} &\leq \frac{z_0}{\abs{\alpha - 1} a} \Norm{\Phi}_{L^{\infty}([0,1])} \Norm{\left.g\right\vert_{[a,z_0]}}_{\infty} \abs{ \frac{a}{a-\delta} -\left(\frac{a}{a-\delta}\right)^{\alpha}} <\infty.
	\end{align*}
	The case $\alpha=1$ is again analogous to the case $\alpha\neq 1$.
	By Banach's Fixed Point Theorem, there is a unique solution $g_1\in\cont{1}([a-\delta,a])$ with $g_1(a)=g(a)$ and hence there is a solution $g \in \cont{0}([a-\delta,z_0])\cap\cont{1}([a-\delta,a])$ of \eqref{Eq-g}.
	
	Proceeding iteratively, we can construct a unique solution $g\in \cont{0}([m,z_0])\cap\cont{1}((m,a])$ of \eqref{Eq-g} and since $a\in (0,z_0)$ arbitrary, $g\in\cont{0}([m,z_0])\cap\cont{1}((m,z_0))$. \end{proof}
	
\bigskip
The proof of Lemma \ref{Lem-g} yields a method to simulate the solution $g$ for \eqref{Eq-g}.
We simulated $g$ iteratively using the software \Rs{} \cite{R}: in each step, we simulated $g$ on the interval $[a-\delta,a]$ where $a$ is given from the previous step (in the first step $a=z_0$) and we chose $\delta>0$ such that the operator $G$ respectively $G_a$ in the proof of Lemma \ref{Lem-g} is a contraction on $\cont{0}([a-\delta,a])$, i.e.
\renewcommand*{\arraystretch}{1.5}
\begin{align*}
\delta= \left\lbrace \begin{array}{ll}
\argmin\limits_{\delta\in(0,a)} \:\abs{\frac{z_0}{\abs{\alpha-1}a} \Norm{\Phi}_{\infty} \abs{\frac{a}{a-\delta}-\left(\frac{a}{a-\delta}\right)^{\alpha}}-1}+10^{-5},\quad & \text{if }\alpha\neq 1, \\ 
\argmin\limits_{\delta\in(0,a)} \: \abs{z_0 \Norm{\Phi}_{\infty} \ln\left(\frac{a}{a-\delta}\right) \rec{a-\delta}-1}+10^{-5}, & \text{if }\alpha=1.
\end{array} \right.
\end{align*} 
For all points $x$, which were chosen equidistant between $a-\delta$ and $a$, we set $g_0(x):=g(a)$, where $g(a)$ is known from the previous step ($g(a)=g(z_0)=1$ for the first step). For $n\in\IN_0$, we update $g_n$ using
\begin{align*}
g_{n+1}(x) &:= \frac{\alpha_0 z_0}{x^{\alpha}} \int\limits_a^x \left(\frac{z}{z_0-z}\right)^{\alpha} \int\limits_z^a \frac{\Phi\left(\frac{z}{z'}\right)}{(z')^2} (z_0-z')^{\alpha-1} g_n(z')\:dz' dz + \left(\frac{a}{x}\right)^{\alpha}g_n(a)\\
&\mathrel{\phantom{=}} + \frac{\alpha_0 z_0}{x^{\alpha}} \int\limits_a^x \left(\frac{z}{z_0-z}\right)^{\alpha} \int\limits_a^{z_0} \Phi\left(\frac{z}{z'}\right) (z')^{-2}(z_0-z')^{\alpha-1} g_n(z')\:dz'dz,
\end{align*}
where the last summand was omitted for the first step with $a=z_0$. 
For the integration we used the \Rs{} function \texttt{integrate}.

The choice of $\delta$ guarantees that this iteration converges on $[a-\delta,a]$ as $G_a$ (defined as in the proof of Lemma \ref{Lem-g}, i.e. $G_a[g_n]$ is the first summand in the iteration) is a contraction. $g_n$ is updated until either $\max\limits_{x} \abs{g_n(x)-g_{n-1}(x)} <10^{-6}$ or $n=100$. Then, for $x\in [a-\delta,a]$, $g(x) := g_n(x)$.

We repeated the same procedure for the next step, with $a:=a-\delta$ until $a-\delta < m = 0.005$ (or maximally 1000 times).

From the solution $g$, we then computed $v(z):= (z_0-z)^{\alpha} g(z)$ and $\mathcal{U}(z)=\frac{v(z)}{b(z)}$ and normalized $\mathcal{U}$ such that $\int_{0.005}^{z_0} \mathcal{U}(z)\:dz =1$, where we numerically determined the integral using again the function \texttt{integrate}.

\begin{figure}
	\centering
	\includegraphics[width=\textwidth]{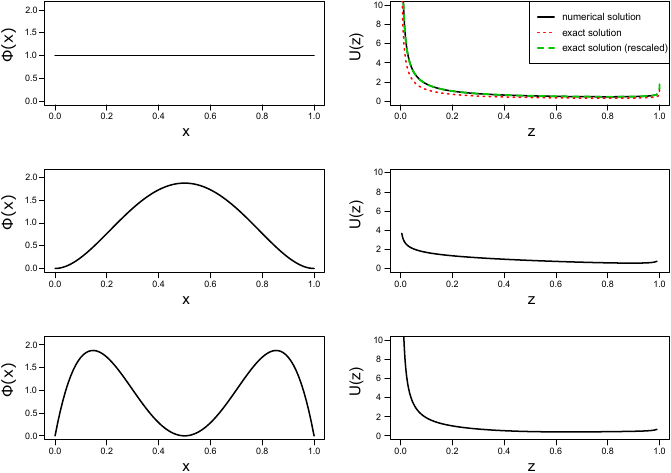}
	\caption{Numerical simulations of the eigenfunction $\mathcal{U}$ for $\beta = 0.4$, $\mu=0.1$, $b(z)=z(1-z)$, $\alpha=0.8$ and different $\Phi$ (the respective function $\Phi$ is shown in the box on the left-hand side). For $\Phi\equiv1$, the exact solution for $\mathcal{U}$ is plotted dotted for comparison, the exact solution with the same scaling as the numerical simulations is plotted dashed}
	\label{fig:1} 
\end{figure}

In Figure \ref{fig:1}, numerical simulations for $\mathcal{U}$ for different kernels, i.e. different $\Phi$, are plotted. In the case $\Phi\equiv 1$ we know the exact solution (c.f. Example \ref{Ex-sol}), we plotted the exact solution for comparison. If the exact solution is scaled in the same way as the numerical simulations, i.e. $\int_{0.005}^{z_0} \mathcal{U}(z)\:dz =1$, it agrees well with the numerical simulation (see Figure \ref{fig:1}, upper right figure).

From the numerical construction of the solutions $\mathcal{U}$ we observe that the distribution of plasmids for the uniform, unimodal, and bimodal segregation kernel look alike. In each case we find a pole at zero plasmids and only few bacteria with a high plasmid number. Therefore, simulations indicate that the segregation kernel does not influence the long-term plasmid distribution. 

\vspace{0.3cm}
\textbf{Acknowledgments} 
	\textit{I want to thank Johannes Müller for intensive discussions. 
	This work was funded by the German Research Foundation (DFG) priority program SPP1617 ``Phenotypic heterogeneity and sociobiology of bacterial populations'' (DFG MU 2339/2-2).\\
	This is a pre-print of an article published in Journal of Mathematical Biology. The final authenticated version is available online at: \url{https://doi.org/10.1007/s00285-018-1310-2}.}

\addcontentsline{toc}{chapter}{Bibliography}
\bibliographystyle{abbrv} 

\begin{thebibliography}{10}
	
	\bibitem{Aakre2012}
	C.~D. Aakre and M.~T. Laub.
	\newblock {Asymmetric cell division: a persistent issue?}
	\newblock {\em {Dev Cell}}, 22(2):235--236, 2012.
	
	\bibitem{Arino1995}
	O.~Arino.
	\newblock {A survey of structured cell population dynamics}.
	\newblock {\em {Acta Biotheor.}}, 43:3--25, 1995.
	
	\bibitem{Beebee2008}
	T.~J.~C. Beebee and G.~Rowe.
	\newblock {\em An introduction to Molecular Ecology}.
	\newblock Oxford Univ. Press, Oxford, 2008.
	
	\bibitem{Bentley1990}
	W.~E. Bentley, N.~Mirjalili, D.~C. Andersen, R.~H. Davis, and D.~S. Kompala.
	\newblock {Plasmid-Encoded Protein: The Principal Factor in the ``Metabolic
		Burden'' Associated with Recombinant Bacteria}.
	\newblock {\em {Biotechnol Bioeng.}}, 35(7):668--681, 1990.
	
	\bibitem{Bentley1993}
	W.~E. Bentley and O.~E. Quiroga.
	\newblock {Investigation of Subpopulation Heterogeneity and Plasmid Stability
		in Recombinant Escherichia coli via a Simple Segregated Model}.
	\newblock {\em {Biotechnol. Bioeng.}}, 42(2):222--234, 1993.
	
	\bibitem{Bonsall1955}
	F.~F. Bonsall.
	\newblock {Endomorphisms of a Partially Ordered Vector Space Without Order
		Unit}.
	\newblock {\em {J. London Math. Soc.}}, s1-30(2):144--153, 1955.
	
	\bibitem{Brezis2010}
	H.~Brezis.
	\newblock {\em {Functional Analysis, Sobolev Spaces and Partial Differential
			Equations}}.
	\newblock Springer, New York, 2010.
	
	\bibitem{Calsina1995}
	{\`A}.~Calsina and J.~Salda{\~n}a.
	\newblock {A model of physiologically structured population dynamics with a
		nonlinear individual growth rate}.
	\newblock {\em {J. Math. Biol.}}, 33(4):335–--364, 1995.
	
	\bibitem{Calvez2012}
	V.~Calvez, M.~Doumic-Jauffret, and P.~Gabriel.
	\newblock {Self-similarity in a General Aggregation-Fragmentation Problem;
		Application to Fitness Analysis}.
	\newblock {\em J. Math. Pures Appl.}, 98(1), 2012.
	
	\bibitem{Campillo2016}
	F.~Campillo, N.~Champagnat, and C.~Fritsch.
	\newblock {Links between deterministic and stochastic approaches for invasion
		in growth-fragmentation-death models}.
	\newblock {\em {J. Math. Biol.}}, 73(6-7):1781--1821, 2016.
	
	\bibitem{Casali2003}
	N.~Casali and A.~Preston, editors.
	\newblock {\em {E. coli Plasmid Vectors: Methods and Applications}}.
	\newblock {Humana Press}, Totowa, NJ, 2003.
	
	\bibitem{Clark2016}
	D.~P. Clark and N.~J. Pazdernik.
	\newblock {\em {Biotechnology}}.
	\newblock Elsevier AP Cell Press, Amsterdam, 2 edition, 2016.
	
	\bibitem{Cushing1998}
	J.~M. Cushing.
	\newblock {\em {An Introduction to Structured Population Dynamics}}.
	\newblock {SIAM}, Philadelphia, 1998.
	
	\bibitem{Dautray1990}
	R.~Dautray and J.-L. Lions.
	\newblock {\em {Mathematical Analysis and Numerical Methods for Science and
			Technology: Volume 3 Spectral Theory and Applications}}.
	\newblock {Springer}, Berlin, Heidelberg, 1990.
	
	\bibitem{Degla2008}
	G.~Degla.
	\newblock {An overview of semi-continuity results on the spectral radius and
		positivity}.
	\newblock {\em J. Math. Anal. Appl.}, 338:101--110, 2008.
	
	\bibitem{Doumic2007}
	M.~Doumic.
	\newblock {Analysis of a population model structured by the cells molecular
		content}.
	\newblock {\em Math Model Nat Phenom}, 2(3):121--152, 2007.
	
	\bibitem{Ganusov2000}
	V.~V. Ganusov, A.~V. Bril'kov, and N.~S. Pechurkin.
	\newblock {Mathematical modeling of population dynamics of unstable
		plasmid-bearing bacterial strains during continuous cultivation in the
		chemostat}.
	\newblock {\em {Biofizika}}, 45(5):908--914, 2000.
	
	\bibitem{Heijmans1986}
	H.~Heijmans.
	\newblock {The Dynamical Behaviour of the Age-Size-Distribution of a Cell
		Population}.
	\newblock In S.~Levin, J.~A.~J. Metz, and O.~Diekmann, editors, {\em {The
			Dynamics of Physiologically Structured Populations}}, volume~68 of {\em
		{Lecture Notes in Biomathematics}}, pages 185--202. {Springer}, Berlin,
	Heidelberg, 1986.
	
	\bibitem{Magal2008}
	P.~Magal and S.~Ruan, editors.
	\newblock {\em {Structured Population Models in Biology and Epidemiology}},
	volume 1936 of {\em {Lecture Notes in Mathematics}}.
	\newblock {Springer}, Berlin, Heidelberg, 2008.
	
	\bibitem{Metz1986}
	J.~A.~J. Metz and O.~Diekmann, editors.
	\newblock {\em {The Dynamics of Physiologically Structured Populations}},
	volume~68, Berlin, Heidelberg, 1986. {Springer}.
	
	\bibitem{MillionWeaver2014}
	S.~Million-Weaver and M.~Camps.
	\newblock {Mechanisms of plasmid segregation: have multicopy plasmids been
		overlooked?}
	\newblock {\em {Plasmid}}, 75:27--36, 2014.
	
	\bibitem{Mischler2016}
	S.~Mischler and J.~Scher.
	\newblock {Spectral analysis of semigroups and growth-fragmentation equations}.
	\newblock {\em {Ann. Inst. H. Poincaré Anal. Non Linéaire}}, 33(3):849--898,
	2016.
	
	\bibitem{Mueller1982}
	G.~M{\"u}ller, D.~Noack, R.~Schorcht, S.~G{\'a}sp{\'a}r, and L.~Her{\'e}nyi.
	\newblock {Mathematical modelling of segregation processes in microbial
		populations containing a single plasmid species}.
	\newblock {\em {Acta Phys. Acad. Sci. Hung.}}, 53(1-2):255--262, 1982.
	
	\bibitem{Mueller2017}
	J.~M\"uller, K.~M\"unch, B.~Koopmann, E.~Stadler, L.~Roselius, D.~Jahn, and
	R.~M\"unch.
	\newblock {{Plasmid segregation and accumulation}}.
	\newblock ArXiv e-prints, 1701.03448, 2017.
	
	\bibitem{Perthame2007}
	B.~Perthame.
	\newblock {\em {Transport Equation in Biology}}.
	\newblock {Frontiers in Mathematics}. Birkhäuser, Basel, 2007.
	
	\bibitem{Pogliano2001}
	J.~Pogliano, T.~Q. Ho, Z.~Zhong, and D.~R. Helinski.
	\newblock {Multicopy plasmids are clustered and localized in Escherichia coli}.
	\newblock {\em {Proc. Natl. Acad. Sci. U.S.A.}}, 98(8):4486--4491, 2001.
	
	\bibitem{R}
	{R Core Team}.
	\newblock {\em R: A Language and Environment for Statistical Computing}.
	\newblock R Foundation for Statistical Computing, Vienna, Austria, 2017.
	
	\bibitem{Rudin1886}
	W.~Rudin.
	\newblock {\em {Real and complex analysis}}.
	\newblock {McGraw-Hill International Editions Mathematics Series}. McGraw-Hill,
	New York, NY, 3 edition, 1986.
	
	\bibitem{Sawashima1964}
	I.~Sawashima.
	\newblock {On Spectral Properties of Some Positive Operators}.
	\newblock {\em {Natur. Sci. Rep. Ochanomizu Univ.}}, 15(2):53--64, 1964.
	
	\bibitem{Stewart1977}
	F.~M. Stewart and B.~R. Levin.
	\newblock {The Population Biology of Bacterial Plasmids: A PRIORI Conditions
		for the Existence of Conjugationally Transmitted Factors}.
	\newblock {\em {Genetics}}, 87(2):209--228, 1977.
	
	\bibitem{Summers1996}
	D.~K. Summers.
	\newblock {\em The Biology of Plasmids}.
	\newblock John Wiley \& Sons, Oxford, 1996.
	
	\bibitem{Webb1985}
	G.~F. Webb.
	\newblock {\em {Theory of Nonlinear Age-Dependent Population Dynamics}},
	volume~89 of {\em {Monographs and textbooks in pure and applied
			mathematics}}.
	\newblock Marcel Dekker, New York, 1985.
	
	\bibitem{Webb2008}
	G.~F. Webb.
	\newblock {Population Models Structured by Age, Size, and Spatial Position}.
	\newblock In P.~Magal and S.~Ruan, editors, {\em {Structured Population Models
			in Biology and Epidemiology}}, volume 1936 of {\em {Lecture Notes in
			Mathematics}}. {Springer}, Berlin, Heidelberg, 2008.
	
	\bibitem{Yosida1995}
	K.~Yosida.
	\newblock {\em {Functional Analysis}}, volume 123 of {\em {Classics in
			Mathematics}}.
	\newblock {Springer}, Berlin, Heidelberg, 1995.
	
\end{thebibliography}

\end{document}